\newtheorem{theorem}{Theorem}
\newtheorem{remark}[theorem]{Remark}
\theoremstyle{definition}
\newtheorem{definition}[theorem]{Definition}
 \newtheorem{alg}{Algorithm}
\DeclareMathOperator*{\argmin}{arg\,min}
\DeclareMathOperator{\sinc}{sinc}
\newcommand{\R}{\mathbb R}
\newcommand{\N}{\mathbb N}
\newcommand\edot{\,\cdot\,}
\newcommand{\Fo}{\mathcal  F}
\newcommand{\Wo}{\mathbf W}
\newcommand{\Ko}{\mathbf K}
\newcommand{\supp}{\mbox{supp}}
\newcommand{\rf}{\mathbf G}
\newcommand{\X}{ \mathcal X}
\newcommand{\Y}{ \mathcal Y}
\newcommand{\Om}{\Omega}
\newcommand{\cT}{ \mathcal T}
\newcommand{\rmd}{\mathrm d}
\newcommand{\eps}{\epsilon}
\newcommand{\coloneqq}{:=}
\newcommand{\la}{\lambda}
\newcommand{\trans}{\mathsf{T}}
\newcommand\abs[1]{\left\vert#1\right\vert}
\newcommand\sabs[1]{\vert#1\vert}
\newcommand\norm[1]{\left\Vert#1\right\Vert}
\newcommand\snorm[1]{\Vert#1\Vert}
\newcommand{\enorm}{\left\|\;\cdot\;\right\|}
\newcommand\set[1]{\left\{#1\right\}}
\newcommand\kl[1]{\left(#1\right)}
\newcommand{\fnum}{{\tt f}}
\newcommand{\pnum}{{\tt p}}
\newcommand{\qnum}{{\tt q}}
\newcommand{\gnum}{{\tt g}}
\newcommand{\unum}{{\tt u}}
\newcommand{\Wnum}{\boldsymbol{\tt{W}}}
\newcommand{\Dnum}{\boldsymbol{\tt{D}}}
\newcommand{\Tnum}{\boldsymbol{\tt{\Phi}}}
\newcommand{\interior}{\operatorname{Int}}
\newcommand{\mO}{ \mathcal O}
\newcommand{\pd}{{\partial}}
\newcommand{\dom}{\operatorname{Dom}}
\begin{document}

\title{Reconstruction algorithms for photoacoustic tomography in
heterogenous damping media}

\author{Linh V. Nguyen}

\affil{Department of Mathematics, University of Idaho\authorcr
              875 Perimeter Dr, Moscow, ID 83844, USA\authorcr
              \tt{lnguyen@uidaho.edu}             }

\author{Markus~Haltmeier}

\affil{Department of Mathematics, University of Innsbruck\authorcr
Technikerstrasse 13, 6020 Innsbruck, Austria\authorcr
\tt{markus.haltmeier@uibk.ac.at}  }

\date{August 19, 2018}

\maketitle

\begin{abstract}
In this article, we study several reconstruction methods for the inverse source problem of photoacoustic tomography (PAT) with spatially variable sound speed and damping. The backbone of these methods is the adjoint operators, which we thoroughly analyze  in both the $L^2$- and $H^1$-settings. They are casted in the form of a nonstandard wave equation. We derive the well-posedness of the aforementioned wave equation in a natural functional space, and also prove the finite speed of propagation. Under the uniqueness and visibility condition, our formulations of the standard iterative reconstruction methods, such as  Landweber's and conjugate gradients (CG), achieve a linear rate of convergence in either $L^2$- or $H^1$-norm.  When the visibility condition is not satisfied, the problem is severely ill-posed and one must apply a regularization technique to stabilize the solutions. To that end, we study two classes of regularization methods: (i) iterative, and (ii) variational regularization. In the case of full data, our simulations show that the CG method works best; it is very fast and robust. In the ill-posed case, the CG method behaves unstably.  Total variation regularization method (TV), in this case, significantly improves the reconstruction quality.

\noindent\textbf{Keywords:}
Photoacoustic tomography,
Tikhonov regularization, 
total variation,
attenuation,
visibility  condition,
adjoint operator,
finite speed of propagation.
\end{abstract}

\section{Introduction}
\label{intro}
Photoacoustic tomography (PAT) is an emerging hybrid method of imaging that combines the high contrast of optical imaging  with the good resolution of ultrasound tomography. As illustrated in Figure \ref{fig:pat}, the  biological
object of interest is scanned with  a pulsed optical illumination. The
photoelastic effect causes a thermal expansion and a subsequent ultrasonic  wave propagating in space.
One measures the  ultrasonic pressure on an observation surface outside of the object.  The aim of PAT  is to recover the initial pressure distribution inside the tissue from the measured data. The initial pressure distribution
 contains helpful internal information of the object and is the image to be reconstructed.

\begin{figure}[tbh!]
\centering
  \includegraphics[width=\columnwidth]{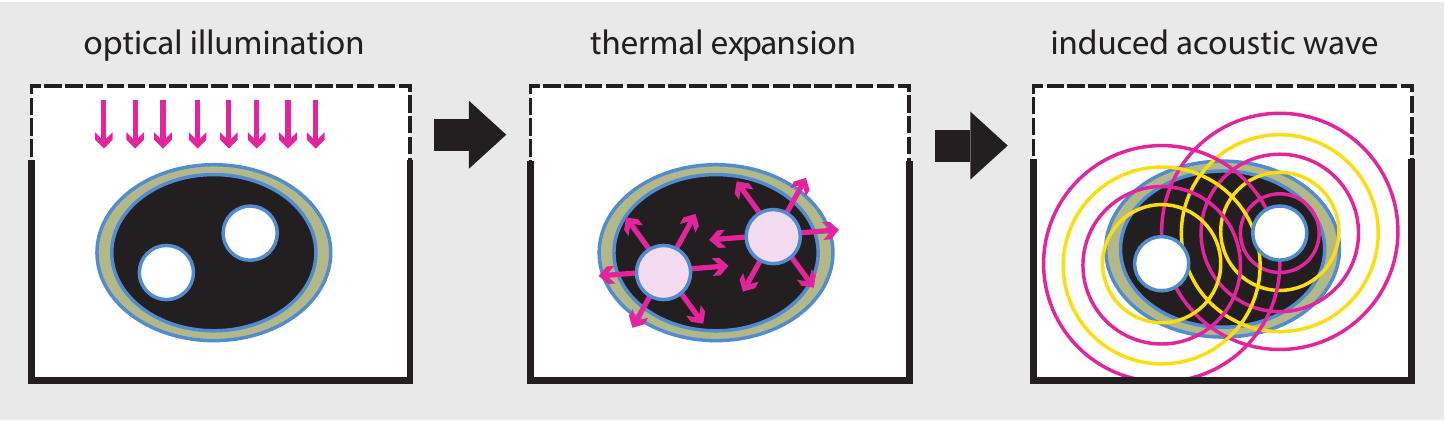}
\caption{\mbox{Left:}\label{fig:pat}  A biological object is illuminated with an optical pulse. \mbox{Middle:} Absorption of optical energy 
 causes thermal expansion.  \mbox{Right:} Thermal expansion induces an ultrasonic  wave that is measured outside of the sample 
 and used to reconstruct the image of the object.}
\end{figure}

 The standard model in PAT assumes homogeneous non-damping acoustic media and has been well studied. There exist several methods to solve the corresponding inverse  problem of PAT such as explicit inversion formulas \cite{FPR,XW05,Kun07,FHR,IPI,Halt2d,Halt-Inv,natterer2012photo,Pal-Uniform}, series solutions \cite{kunyansky2007series,agranovsky2007uniqueness}, time reversal \cite{FPR,HKN,HTRE,US,stefanov2011thermo}, and quasi-reversibility \cite{clason2008quasi}. Reviews on these methods can be found in \cite{HKN,kuchment2014radon,kuchment2008mathematics,RosNtzRaz13}.
Discrete iterative approaches, which are based on a discretization of the forward problem
together with numerical solution methods for solving the resulting system of linear equations can be found
in
\cite{paltauf2002iterative,paltauf2007experimental,zhang2009effects,deanben2012accurate,wang2012investigation,rosenthal2013current,huang2013full,wang2014discrete}. Recently, iterative schemes in a Hilbert space settings have also  been introduced and studied; see \cite{arridge2016adjoint,belhachmi2016direct,haltmeier2017iterative}.

\paragraph{PAT in heterogenous damping media:}
In this article, we are interested in  PAT accounting for   spatially variable sound speed and spatially variable damping. It is still an ongoing research which is the correct model for attenuation, and several different modeling equations have been used
(see, or example,  \cite{lariviere2005image,lariviere2006image,ammari2012photoacoustic,kowar2012photoacoustic,acosta2017thermoacoustic,ammari2011time,burgholzer2007compensation,homan2013multi,kowar2014time,palacios2016reconstruction,treeby2010photoacoustic}).   For mathematical interest,
we consider a simple attenuation model using the  damped wave equation, which reads
\begin{equation} \label{E:PAT}
\left. \begin{array}{ll} [c^{-2}(x) \, \partial_{tt} + a(x) \, \partial_{t} -  \Delta] p(x,t) =0  & \mbox{on } \R^d \times \R_+,
\\[6 pt] p(x,0) =f(x)  & \mbox{on }  \R^d , \\[6 pt] 
p_t(x,0) = -c^2(x) \, a(x) f(x) & \mbox{on } \R^d \,.\end{array} \right.
\end{equation}
Here, $c \colon \R^d \to \R$ is the variable sound speed,  $a \colon \R^d \to \R$ the variable damping coefficient, and $f \colon \R^d \to \R$ the desired  initial pressure. We assume that $c$ and $a$ are smooth functions, $c$ is bounded between two positive constants, and $a \geq 0$. Let us denote by $S$ the observation surface and by $T>0$ the final measurement  time. We will assume that $S$ is a (relatively) closed subset of $\partial \Om$ with nonempty interior $\interior(S)$, where $\Om$ is an open subset of $\R^d$ that contains the support of $f$. The mathematical problem of PAT is to invert the map $\Wo\colon f \mapsto g \coloneqq p|_{S \times (0,T)}$. It is referred to as the inverse source problem of PAT. In this article, we assume that $\Wo$ is injective (that is, the reconstruction is unique). For the full data problem, it holds as long as $T> \max_{x \in \Om} \mbox{dist}(x, \partial \Om)$ (see,  \cite{acosta2017thermoacoustic}). The injectivity of $\Wo$ in the case of partial data is still an open problem and beyond the scope of this article.

There are only few papers analyzing the damped wave equation~\eqref{E:PAT}
for PAT \cite{homan2013multi,palacios2016reconstruction,acosta2017thermoacoustic}.
In \cite{homan2013multi}, some interesting microlocal analysis results have been derived for \eqref{E:PAT} and a 
time-reversal framework for image reconstruction has been proposed. This time reversal method is only proved 
to converge (linearly) to the exact solution when the attenuation coefficient is small enough. In the recent work \cite{palacios2016reconstruction}  a modification of the time reversal method has been proposed that converges (linearly) 
to the solution for arbitrarily large attenuation coefficient. A more general model was considered in \cite{acosta2017thermoacoustic}. Let us mention that, in order for the algorithm to converge, both papers assume that the data is measured on a closed surface completely surrounding the object (i.e., full data problem).
Opposed to that, the analysis and algorithms we derive in the present paper apply to the partial data  problem 
as well as the full data problem.

\paragraph{Main contributions:}

In this article, we establish the mathematical foundation of several reconstruction methods for the inverse source problem of PAT with variable sound speed and damping. Namely, we formulate the adjoint operator in the continuous setting using a nonstandard wave equation. We prove the well-posedness of the adjoint equation in a natural setting and its finite speed of propagation. We then propose and analyze various iterative reconstruction algorithms for PAT employing our knowledge of the adjoint operator.  We study both the full and limited data cases. Under the uniqueness and the visibility condition (described in Section \ref{S:Wellposedness}),
our algorithms converge linearly to the  solution, even for the partial data problem. 
The convergence is shown in the  $L^2$-type norm (on image and pre-image space) and the $H^1$-type norm.  
We note that convergences in the $H^1$-type norm have been a common practice in the
inverse source problem of PAT (see for example \cite{homan2013multi,palacios2016reconstruction}).
However, in practice, the image to be recovered may not be in $H^1$.
Therefore, having convergence in the $L^2$-norm is helpful, too.

In   case that  the visibility condition does not hold, the inverse problem of
PAT is severely ill-posed  and regularization methods have to be applied for its solution.
 For that purpose Landweber's, the steepest  descent and the CG method can be applied as well, 
 since they are known to be regularization methods when combined with Morozov's discrepancy principle \cite{engl1996regularization,hanke1995conjugate,kaltenbacher2008iterative}. 
 Additionally, we study generalized Tikhonov regularization \cite{scherzer2009variational}, which  consists in minimizing
the penalized residual functional
$ \Phi(f) = \frac{1}{2} \snorm{ \Wo f -  g}^2  + \la \, \rf(f) $.
Here $\rf \colon \X \to [0, \infty]$ is a convex regularization term and $\la>0$ is the
regularization  parameter.  In particular, we investigate the quadratic, $\rf(f)  = \int_\Omega \abs{ \nabla f}^2$, and the total variation (TV), $\rf(f)  = \int_\Omega \abs{ \nabla f}$, regularizations.
In the quadratic case, the above iterative methods can again be applied to 
minimize  $\Phi$. For the latter case,  we use the minimization algorithm  of \cite{sidky2012convex}, which is a special instance of the  Chambolle-Pock algorithm \cite{chambolle2011}.
Using a discretization of the forward operator with matched discrete adjoint,
variational methods including TV minimization have been applied   in \cite{huang2013full}.
Using continuous formulations of the adjoint, variational methods have been  applied to PAT
in~\cite{arridge2016accelerated,javaherian2017multi}. Our application of variational regularization
for the  damped wave equation~\eqref{E:PAT} is new.

\paragraph{Outline:}
The article is organized as follows. In Section~\ref{S:Adjoint}, we derive the explicit formulation of the adjoint operator. We also discuss some properties of the adjoint equation. In Section \ref{sec:inverse}, we study the inverse problem of PAT in inhomogeneous  damping media. We show that the inverse problem of PAT is well-posed under the visibility condition (see Subsection~\ref{S:Wellposedness}). We analyze iterative  and variational reconstruction algorithms in the well-posed and the ill-posed cases.  In section~\ref{S:Numerical}, we present  various numerical examples for the proposed methods. The main theoretical result, the analysis of the adjoint equation, is presented Appendix~\ref{A:well}. We briefly describe the $k$-wave method, which we use for our forward and adjoint simulation, in Appendix~\ref{app:kspace}. 

\section{The adjoint operator for PAT} \label{S:Adjoint}

Let us recall that the PAT forward operator is given by $\Wo\colon f \mapsto g \coloneqq p|_{S \times (0,T)}$, where $p$ is defined by the acoustic wave equation (\ref{E:PAT})
and $S$ is a closed subset of $\pd \Om$. Our goal is to invert $\Wo$ using the methods introduced in the following  section.
 It is crucial to analyze the adjoint operator $\Wo^*$ of $\Wo$. To that end, we first need to identify the correct mapping spaces
 for $\Wo$. We, indeed, will consider two realizations, $\Wo_0$ and $\Wo_1$, of $\Wo$ corresponding
 to two different choices of the mapping spaces.

We first assume that $\supp(f) \subset \Om_0$, where $\Om_0 \Subset \Om$. For the spaces of $f$, let us denote
\begin{eqnarray*}
\X_0 &\coloneqq \{ f \in L^2(\R^d) \colon \supp(f) \subset \overline{\Om}_0 \}, \\
\X_1 & \coloneqq \{ f \in H^1(\R^d) \colon \supp(f) \subset \overline{\Om}_0 \}.
\end{eqnarray*}
Then, $\X_0$ and $\X_1$ are Hilbert spaces with the respective norms
$\|f\|_{\X_0}  = \|c^{-1} f\|_{L^2(\Om_0)} $ and $\|f\|_{\X_1}  =  \|\nabla f\|_{L^2(\Om_0)}$. We note that $\X_0 \cong L^2(\Om_0)$ and $\X_1 \cong H_0^1(\Om_0)$. The above chosen norms are convenient for our later purposes.

For the spaces of $g$, we fix a nonnegative function $\chi \in C^\infty(\partial \Om \times [0,T])$ such that $\supp(\chi) = \Gamma \coloneqq S \times [0,T]$. Let us denote:
\begin{eqnarray*}
    \Y_0 &=& \left\{g \colon \|g\|_{\Y_0} \coloneqq \|\sqrt{\chi} \, g\|_{L^2(\Gamma)}< \infty \right\}, \\
    \Y_1 &=& \left\{g \colon  g(\edot,0) \equiv 0,~ \|g\|_{\Y_1}  \coloneqq  \|g_t\|_{\Y_0} < \infty \right \}.
 \end{eqnarray*}
We
define
$$ \Wo_i = \Wo|_{\X_i} \colon  (\X_i, \enorm{}_{\X_i}) \to (\Y_i, \enorm{}_{\Y_i})
\quad \mbox{ for }i=0,1. $$
Let $H^i(\Gamma)$ be the standard Sobolev space of order $i$ on $\Gamma$. Notice that $\Wo$ is a bounded map from $\X_i \to H^i(\Gamma)$. This comes from the fact that $\Wo$ is the sum of two Fourier integral operators of oder zero (see, e.g., \cite[Lemma~3]{homan2013multi}). Since $H^i(\Gamma) \subset \Y_i$, we obtain:

\begin{theorem} For $i=0,1$, $\Wo_i$ is a bounded map from $\X_i$ to $\Y_i$.
\end{theorem}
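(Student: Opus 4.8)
The plan is to prove the statement by factoring $\Wo_i$ through the Sobolev space $H^i(\Gamma)$. Since the preceding discussion already supplies the boundedness $\Wo\colon \X_i \to H^i(\Gamma)$ (from the Fourier integral operator calculus, together with the fact that the $\X_i$-norms are equivalent to the standard ones — $c$ is pinched between two positive constants, and $f \in \X_1$ means $f \in H^1_0(\Om_0)$ so Poincar\'e applies), it remains only to verify that $H^i(\Gamma)$ embeds \emph{continuously} into $\Y_i$. The composition of a bounded operator with a continuous embedding is bounded, which is the claim.

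First I would dispose of the case $i=0$. Here $H^0(\Gamma) = L^2(\Gamma)$ while $\Y_0$ carries the weighted norm $\|g\|_{\Y_0} = \|\sqrt{\chi}\,g\|_{L^2(\Gamma)}$. As $\chi$ is smooth on the compact set $\partial\Om \times [0,T]$, it is bounded; writing $M := \|\chi\|_{L^\infty}$ gives
$$ \|g\|_{\Y_0}^2 = \int_\Gamma \chi\,\abs{g}^2 \le M \int_\Gamma \abs{g}^2 = M\,\|g\|_{L^2(\Gamma)}^2, $$
so $L^2(\Gamma) \hookrightarrow \Y_0$ continuously and hence $\|\Wo_0 f\|_{\Y_0} \le \sqrt{M}\,\|\Wo f\|_{H^0(\Gamma)} \le C\,\|f\|_{\X_0}$.

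For $i=1$ two points need attention. First, membership in $\Y_1$ demands that the trace $g(\edot,0)$ vanish; this follows from the support hypothesis, since for $x \in S \subset \partial\Om$ we have $g(x,0) = p(x,0) = f(x) = 0$ because $\supp(f) \subset \overline{\Om}_0 \Subset \Om$ is disjoint from $\partial\Om$. Second, the $\Y_1$-norm is $\|g_t\|_{\Y_0} = \|\sqrt{\chi}\,g_t\|_{L^2(\Gamma)}$, and since $\partial_t$ is one of the coordinate derivatives on $\Gamma = S \times [0,T]$ we have $\|g_t\|_{L^2(\Gamma)} \le \|g\|_{H^1(\Gamma)}$; combining with the boundedness of $\chi$ exactly as above yields $\|g\|_{\Y_1} \le \sqrt{M}\,\|g\|_{H^1(\Gamma)}$, i.e.\ the continuous inclusion $H^1(\Gamma) \hookrightarrow \Y_1$. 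Composing with the FIO bound completes the argument.

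I do not expect a genuine obstacle here: the mathematical substance of the statement is entirely carried by the Fourier integral operator boundedness $\Wo\colon \X_i \to H^i(\Gamma)$ imported from \cite{homan2013multi}, while the remaining work reduces to the elementary weighted-norm estimate and the trace identity. The one point I would state carefully is that trace identity $g(\edot,0)=0$, since it is precisely what guarantees that the range of $\Wo_1$ actually \emph{lies} in $\Y_1$, rather than merely satisfying the norm bound.
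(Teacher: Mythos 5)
Your proposal is correct and follows essentially the same route as the paper: the paper also factors $\Wo_i$ through $H^i(\Gamma)$ via the Fourier integral operator boundedness from \cite{homan2013multi} and then invokes the inclusion $H^i(\Gamma) \subset \Y_i$. The only difference is that you spell out what the paper leaves implicit — the weighted-norm estimate giving continuity of the embedding and the vanishing trace $g(\edot,0)=0$ needed for the range of $\Wo_1$ to lie in $\Y_1$ — both of which you verify correctly.
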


From now one, we consider $\chi \, g$ as a function on $\pd \Om \times [0,T]$, which vanishes on $(\pd \Om \setminus S) \times [0,T]$. The following theorem gives us an explicit formulation of the adjoint operator $\Wo_i^*$ of $\Wo_i$:

\begin{theorem} \label{T:adjoint} The following results hold.
\begin{enumerate} [leftmargin=4em,label=(\alph*)]
\item Let $g \in H^1([0,T]; H^{-1/2}(\partial \Om)) \cap \X_0$. Consider the wave equation
\begin{eqnarray} \label{E:adjoint}
\left.\begin{array}{ll} [c^{-2} \, \partial_{tt} - a \partial_t - \Delta] q =0,
&  (\R^d \setminus \partial \Om) \times (0,T), \\[6 pt] q(T) =0,~ q_t(T) =0,  \\[6 pt]
\big[ q \big] =0, \Big[\frac{\partial q}{\partial \nu} \Big] =\chi  \, g. \end{array} \right.
\end{eqnarray}
Here, $[q]$ denote the jump of $[q]$ across the boundary $\partial \Omega$. Then $$\Wo^*_0 g = q_t(0)|_{\Om_0}.$$

\item  Let $g \in H^1([0,T]; H^{-1/2}(\partial \Om)) \cap \X_1$. Assume further that $\chi$ is independent of $t$ (i.e., $\chi(y,t) =\chi(y)$). We define
\[\bar g(x,t) = g(x,t) - g(x,T),\]
and consider the wave equation
\begin{eqnarray} \label{E:adjoints}
\left.\begin{array}{ll} [c^{-2} \, \partial_{tt} - a \, \partial_t - \, \Delta] \bar q =0,
& (\R^d \setminus \partial \Om) \times (0,T), \\[6 pt] \bar q(T) =0, \quad  \bar q_t(T) =0,  \\[6 pt]
\big[ \bar q \big] =0, \Big[\frac{\partial \bar q}{\partial \nu} \Big] =\chi  \, \bar g. \end{array} \right.
\end{eqnarray}
Then,
$$\Wo^*_1 g = \Pi [\bar q_t(0)].$$
Here, $\Pi$ is the projection on the space $\X_1 \cong H_0^1(\Om_0)$, given by
$$\Pi(f) = f -\phi,$$ where $\phi$ is the harmonic extension of $f|_{\partial \Om_0}$ to $\overline \Om_0$.
\end{enumerate}
\end{theorem}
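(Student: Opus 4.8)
The plan is to prove both identities by the standard weak-duality (Green/Lagrange) computation: pair the forward equation \eqref{E:PAT} with the relevant adjoint field, pair the adjoint equation with $p$, combine them, and integrate over $(\R^d\setminus\partial\Om)\times(0,T)$, integrating by parts in both $x$ and $t$. Three ingredients make the bookkeeping close. The terminal conditions on the adjoint field annihilate every contribution at $t=T$; the Cauchy data of $p$ at $t=0$ (crucially $p_t(0)=-c^2 a f$) produce the pre-image pairing; and the prescribed jump of the normal derivative, read off through Green's identity applied separately on $\Om$ and on $\R^d\setminus\overline{\Om}$, produces the data pairing on $\Gamma$.

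For part (a) I would test \eqref{E:PAT} against $q$ and \eqref{E:adjoint} against $p$ and subtract. The second-order time terms combine into $c^{-2}\partial_t(p_t q - q_t p)$ and the damping terms into $a\,\partial_t(p q)$, both total time derivatives; integrating in $t$ and using $q(T)=q_t(T)=0$ leaves only initial traces. Here the precise datum $p_t(0)=-c^2 a f$ is exactly what makes the two damping contributions at $t=0$ cancel, so the pre-image side collapses to $\int_{\Om_0} c^{-2} f\, q_t(0)=\langle f,q_t(0)\rangle_{\X_0}$. Simultaneously, splitting the Laplacian pairing over $\Om$ and its complement, the interior gradient terms cancel and only the interface term survives; since $q$ is continuous across $\partial\Om$ while $[\partial_\nu q]=\chi g$, and $p,\partial_\nu p$ are continuous there, this equals $\int_\Gamma \chi g\, p=\langle \Wo_0 f,g\rangle_{\Y_0}$. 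Comparing the two sides gives $\Wo_0^* g=q_t(0)|_{\Om_0}$.

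For part (b) the inner products change, so I would run the same argument at the $H^1$ level. First I note that $\bar q_t$ itself solves an adjoint equation of the form \eqref{E:adjoints} with zero terminal data and jump $[\partial_\nu \bar q_t]=\chi\,\bar g_t=\chi\, g_t$ (using that $\chi$ is $t$-independent and $\bar g_t=g_t$); this explains both why differentiating in time is natural and why the data pairing will involve $g_t$. The role of $\bar g=g-g(\cdot,T)$ is compatibility: the zero terminal data can be matched by a jump source only if that source vanishes at $t=T$, and indeed $\bar g(\cdot,T)=0$ while $\bar g_t=g_t$. Pairing \eqref{E:PAT} against $\bar q_{tt}$ and the equation for $\bar q_t$ against $p_t$ and adding, the Laplacian pairing becomes $\partial_t(\nabla p\cdot\nabla\bar q_t)$, whose initial trace gives the Dirichlet form $\int_{\Om_0}\nabla f\cdot\nabla\bar q_t(0)$, while the interface term becomes $\int_\Gamma \chi\, g_t\, p_t=\langle\Wo_1 f,g\rangle_{\Y_1}$. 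Finally, since $\bar q_t(0)$ need not vanish on $\partial\Om_0$ whereas $\X_1\cong H_0^1(\Om_0)$, I replace it by its $\X_1$-orthogonal projection $\Pi[\bar q_t(0)]=\bar q_t(0)-\phi$; the harmonic part $\phi$ is Dirichlet-orthogonal to every $f\in H_0^1(\Om_0)$, so the pairing is unchanged and $\Wo_1^* g=\Pi[\bar q_t(0)]$.

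I expect the main obstacle to be twofold. First, every integration by parts above must be justified at the available regularity: the adjoint field solves a transmission problem with an $H^{-1/2}$-type jump in its normal derivative, so the boundary traces, the splitting of Green's identity across $\partial\Om$, and the decay needed to discard terms at infinity all rely on the well-posedness and finite-speed-of-propagation results of Appendix~\ref{A:well}. Second, and more delicate, is the damping in part (b): with time-differentiated multipliers the first-order terms cancel only in the interior, and one is left with an initial damping remainder $\int a f\,\bar q_{tt}(0)$ that must be reorganized rather than left standing. I would handle this by using the adjoint equation for $\bar q_t$ in the form $\Delta\bar q_t(0)=c^{-2}\bar q_{ttt}(0)-a\bar q_{tt}(0)$ together with $\int_{\Om_0}\nabla f\cdot\nabla\bar q_t(0)=-\int_{\Om_0} f\,\Delta\bar q_t(0)$ (valid since $\supp f$ is interior to $\Om$) to trade the spurious $\bar q_{tt}(0)$ term against $\Delta\bar q_t(0)$, so that the pre-image side collapses to the pure Dirichlet pairing. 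Making this damping bookkeeping close cleanly is the technical heart of (b); the remainder of the argument is the same space-time integration by parts as in (a).
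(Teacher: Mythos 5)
Your part (a) is correct and is the same argument the paper itself invokes by deferring to \cite[Theorem~3.2]{haltmeier2017iterative}: the terminal data annihilate the $t=T$ traces, the datum $p_t(0)=-c^2 a f$ makes the damping contributions at $t=0$ cancel, and the transmission conditions turn the spatial terms into the pairing over $\Gamma$, giving $\langle \Wo_0 f,g\rangle_{\Y_0}=\langle f, q_t(0)\rangle_{\X_0}$.

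Part (b), however, contains a genuine gap at precisely the step you call the technical heart. With the sign conventions that make (a) come out as stated, your pairing yields
\begin{equation*}
\langle \Wo_1 f,g\rangle_{\Y_1}
=\int_{\Om_0}\nabla f\cdot\nabla \bar q_t(0)\,dx
-\int_{\Om_0} a\, f\, \bar q_{tt}(0)\,dx
\qquad\text{for all } f\in\X_1,
\end{equation*}
and your proposed cure for the second term is circular. Substituting $\int_{\Om_0}\nabla f\cdot\nabla\bar q_t(0)\,dx=-\int_{\Om_0}f\,\Delta\bar q_t(0)\,dx$ and $\Delta\bar q_t(0)=c^{-2}\bar q_{ttt}(0)-a\,\bar q_{tt}(0)$ does absorb the remainder, but it converts the right-hand side into $-\int_{\Om_0}c^{-2}f\,\bar q_{ttt}(0)\,dx$, which is no longer the Dirichlet pairing; to conclude $\Wo_1^*g=\Pi[\bar q_t(0)]$ you would still have to show that this expression equals $\int_{\Om_0}\nabla f\cdot\nabla\bar q_t(0)\,dx$, and undoing the very same substitution shows that this is equivalent to $\int_{\Om_0}a\,f\,\bar q_{tt}(0)\,dx=0$ --- exactly the term you set out to eliminate. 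The identity has been rewritten, not closed.

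Moreover, the remainder is not a bookkeeping artifact: it reappears under every admissible pairing (for instance, integrating $\int_\Gamma\chi\,\bar g\,p_{tt}$ by parts instead leads to the same result), and it is generically nonzero when $a\not\equiv 0$ on $\Om_0$. This is exactly the point where the analogy with the undamped case breaks down: in \cite{haltmeier2017iterative} one has $p_t(0)=0$, so the time-boundary term $\int_{\R^d}c^{-2}p_t(0)\,\bar q_{tt}(0)\,dx$ that generates the remainder vanishes identically, whereas here $p_t(0)=-c^2af$. What your computation actually establishes is $\Wo_1^*g=\Pi[\bar q_t(0)]+w$, where $w\in H_0^1(\Om_0)$ solves the Dirichlet problem $\Delta w=a\,\bar q_{tt}(0)$ in $\Om_0$. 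To obtain the formula as stated one must either prove that this correction vanishes (your argument does not do this) or impose an additional hypothesis such as $a\equiv 0$ on $\Om_0$.
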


The proof for Theorem~\ref{T:adjoint} is similar to that of \cite[Theorem~3.2]{haltmeier2017iterative}. We skip it for the sake of brevity. The analysis of (\ref{E:adjoint}), which is the main theoretical achievement of this article, is presented in Theorem~\ref{E:well-posed}. Namely, we show that if $g \in  H^1([0,T];H^{-1/2}(\partial \Om))$, equation (\ref{E:adjoint}) has a unique solution $q \in L^2([0,T]; H^1(\R^d))$ satisfying $q' \in L^2([0,T];L^2(\R^d))$, and $q'' \in L^2([0,T];H^{-1}(\R^d))$. Moreover,  $q$ satisfies the finite speed of propagation: let $c_+ = \max_{x \in \R^d} c(x)$, then $q(x,t) =0$ for any $(x,t) \in \Om^c \in [0,T]$ such that $dist(x,\partial \Om) \geq c_+ (T-t)$.  In the absence of damping (i.e., $a=0$), an existence and uniqueness of equation (\ref{E:adjoint}) has been proved in \cite{belhachmi2016direct}.  Compared to their result, we require less regularity on $g$ and the solution space is more natural. Moreover, the finite speed of propagation is new. It helps us to truncate the calculation domain when needed.

\begin{remark}
Let us make the following observations:
\begin{enumerate}[leftmargin=4em,label=(\alph*)]
\item Since $H^1([0,T];H^{-1/2}(\partial \Om)) \cap \Y_i$ is dense in both $\Y_i$ for $i=0,1$, the adjoint operators $\Wo_0^*$ and $\Wo_1^*$ are uniquely determined from the formulas in Theorem~\ref{T:adjoint}.

\item Compared to $\Wo_0^*$,  $\Wo_1^*$ involves an extra projection operator. In our numerical experiments, we will only use $\Wo_0^*$ since it is simpler to implement. However, the knowledge of $\Wo_1^*$ is helpful in designing iterative algorithms that converge in the $H^1$-norm.

\end{enumerate}
\end{remark}

\section{Solution to the inverse problem} \label{sec:inverse}

In this section, we present  methods for inverting the  two realizations  $\Wo_i \colon \X_i \to \Y_i$ for $i  =0,1$.
To that end, we first show that the inverse problems are  well
posed under the visibility condition. We then separately
consider the well-posed and ill-posed situation.

\subsection{Well-posedness under the visibility condition}
\label{S:Wellposedness}

Let us fix several geometric conventions. We will always assume that the sound speed $c$ is smooth and bounded from below by a positive constant. The space $\R^d$ is considered as a Riemannian manifold with the metric $c^{-2}(x) \, dx^2$ and $\Om$ is assumed to be strictly convex with respect to this metric. Then, all the geodesic rays originating inside $\Om$ intersect the boundary $\pd \Om$ at most once. We also assume that the speed $c$ is nontrapping, i.e.,  all such geodesic rays intersect with $\pd \Om$. Also, $\cT^* \Om \setminus 0$ is the cotangent bundle of $\Om$ minus the zero section, which can be identified with $\Om \times (\R^d \setminus \set{0})$.

{\bf Visibility condition:} \emph{There is a closed subset $S_0 \subset \partial \Om$ such that $S_0 \subset \interior(S)$ and the following condition holds: for any element $(x,\xi) \in \cT^*\Om_0 \setminus 0$, one of the unit speed geodesic rays originating from $x$ at time $t=0$ along the directions $\pm \xi$ intersects transversally with $S_0$,  at a time $t<T$.}

Let us recall that, in this article, we will always assume the injectivity of $\Wo_i$. Our first result is that the inversion of $\Wo_i$ is stable under the visibility condition.

\begin{theorem} \label{T:well-posed} Assume that the visible condition holds and $\chi \equiv 1$ on $S_1 \times [0,T]$, where $S_1$ is a closed subset of $\partial \Om$ such that $S_0 \subset \interior(S_1)$ and $S_1  \subset \interior(S)$.
For $i=0,1$, there is a constant $C>0$ such that for any $f \in \X_i$, we have
\begin{equation}\label{eq:wellposed}
\|f\|_{\X_i} \leq C \|g\|_{\Y_i} \quad  \mbox { where }  g= \Wo  f \,.
\end{equation}
\end{theorem}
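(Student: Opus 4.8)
The plan is to prove \eqref{eq:wellposed} along the standard microlocal route for stability under a visibility condition: show that the normal operator $N_i \coloneqq \Wo_i^* \Wo_i$ is an elliptic pseudodifferential operator of order zero on $\cT^* \Om_0 \setminus 0$, deduce from its ellipticity an a priori estimate modulo a compact remainder, and then remove the remainder using the assumed injectivity of $\Wo_i$. The starting point is the FIO structure already recorded above: $\Wo$ is a sum of two Fourier integral operators of order zero whose canonical relations are governed by the two geodesic rays leaving $(x,\xi)$ in the directions $\pm\xi$. The damping coefficient $a$ enters only through the amplitudes (as an exponential decay factor along the bicharacteristics determined by $a$ and $c$); since $a \ge 0$ is smooth and we work over the compact set $\overline\Om_0$ up to the finite time $T$, this factor is bounded below by a positive constant, so the damping only reweights the principal symbol by a strictly positive, bounded factor and does not destroy ellipticity.

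The microlocal heart of the argument is to turn the visibility condition into ellipticity of $N_i$ on $\cT^* \Om_0 \setminus 0$. Fix $(x,\xi)$ there. The visibility condition supplies a unit-speed geodesic from $x$ along $+\xi$ or $-\xi$ meeting $S_0$ transversally at a time $t<T$; since $\chi \equiv 1$ on $S_1 \times [0,T]$ with $S_0 \subset \interior(S_1)$, the cutoff (which is already built into the $\Y_i$ inner products) does not suppress this contribution. Transversality of the intersection is precisely the clean-intersection (Bolker) condition needed for the composition $\Wo_i^* \Wo_i$ to remain a pseudodifferential operator, and it makes the principal symbol of $N_i$ at $(x,\xi)$ a sum of strictly positive contributions over the visible rays; in particular it is bounded below. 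This yields ellipticity of the order-zero operator $N_i$ on all of $\cT^* \Om_0 \setminus 0$. For $i=1$ the extra projection $\Pi$ in $\Wo_1^*$ modifies the operator only near $\partial \Om_0$, through a harmonic extension that is smooth in the interior, so the interior ellipticity on $\cT^* \Om_0$ is unaffected, and the remainder of the argument proceeds with the $H^1$-structure of $\X_1,\Y_1$ replacing the $L^2$-structure.

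A parametrix for the elliptic operator $N_i$ then yields, for any fixed $s>0$, an estimate of the form
\[
\snorm{f}_{\X_i} \le C\,\snorm{N_i f}_{\X_i} + C\,\snorm{f}_{H^{-s}(\Om_0)} \le C'\,\snorm{\Wo_i f}_{\Y_i} + C\,\snorm{f}_{H^{-s}(\Om_0)},
\]
where the second inequality uses $\snorm{N_i f}_{\X_i} \le \snorm{\Wo_i^*}\,\snorm{\Wo_i f}_{\Y_i}$. Since $\X_i \hookrightarrow H^{-s}(\Om_0)$ compactly, the last term is a compact perturbation, and I would finish with the usual contradiction argument: were no uniform $C$ to exist, choose $f_n$ with $\snorm{f_n}_{\X_i}=1$ and $\snorm{\Wo_i f_n}_{\Y_i}\to 0$; compactness extracts a subsequence converging in $H^{-s}$, the a priori estimate promotes this to convergence in $\X_i$ to some $f$ with $\snorm{f}_{\X_i}=1$, and continuity gives $\Wo_i f = 0$, contradicting injectivity of $\Wo_i$.

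I expect the main obstacle to be the microlocal composition step --- verifying rigorously that the transversal intersection in the visibility condition supplies the clean-intersection/Bolker condition, so that $N_i$ is genuinely a $\Psi$DO, and checking that the interplay of the cutoff $\chi$, the nested surfaces $S_0 \subset \interior(S_1) \subset \interior(S)$, and the damping amplitudes keeps the principal symbol elliptic over \emph{all} of $\cT^* \Om_0 \setminus 0$ rather than on a subset. Once that is in place, the parametrix estimate and the injectivity/compactness argument are routine, and identical bookkeeping handles both $i=0$ and $i=1$.
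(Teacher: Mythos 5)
Your proposal follows essentially the same route as the paper's proof: both hinge on the visibility condition making the normal operator $\Wo_i^*\Wo_i$ an elliptic pseudodifferential operator with principal symbol bounded below, then derive a stability estimate modulo a compact remainder, and finally remove that remainder using the injectivity of $\Wo_i$. The only differences are cosmetic --- you use a parametrix estimate and spell out the compactness/contradiction argument, whereas the paper uses a G{\aa}rding-type inequality and cites \cite[Theorem V.3.1]{taylor1981pseudodifferential} for the same functional-analytic step, and it outsources the ellipticity claim (your acknowledged main obstacle) to the analysis of the non-damping case in \cite[Theorem~3.6]{haltmeier2017iterative} together with \cite{homan2013multi}.
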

One proof virtually follows from \cite[Theorem~3.4]{haltmeier2017iterative} line by line. One only needs to refer to \cite{homan2013multi} instead of \cite{US} when needed. We briefly present here another approach. 
\begin{proof}
Observe that $\Wo_i^* \Wo_i$ is, similarly to the non-damping case (see \cite[Theorem~3.6]{haltmeier2017iterative}), an elliptic operator from $\X_i$ into itself with the principal symbol $\sigma(x,\xi)$ being bounded from below by a positive constant $\delta$. We then have
$$\|\Wo_i f\|_{\Y_i}^2 = \left< \Wo_i^* \Wo_i f, f \right>_{\X_i} \geq \delta \left< f, f \right>_{\X_i} + \left<\Ko f, f \right>_{\X_i}, $$ where $\Ko$ is a compact operator. 
Young's inequality gives
$$\|f\|^2_{\X_i} \leq C(\|\Wo_i f\|_{\Y_i}^2  + \|\Ko f\|_{\X_i}).$$
The injectivity of $\Wo_i$ and \cite[Theorem V.3.1]{taylor1981pseudodifferential} gives
$$\|f\|^2_{\X_i} \leq C \|\Wo_i f\|_{\Y_i}^2.\qedhere$$
\end{proof}

\begin{algorithm}
\begin{algorithmic}[1]
\STATE Initialize $f_0^\delta=0$; $k \gets 0$
\WHILE{stopping criteria not satisfied}
\STATE $ s_k = \Wo_i^* ( \Wo_i f^\delta_k - g^\delta)$
\STATE $ \gamma_k = \snorm{s_k}_{\X_i}^2 / \norm{\Wo_i s_k}_{\Y_i}^2$
\STATE $f^\delta_{k+1} = f^\delta_k - \gamma_k s_k$
\STATE $k  \gets  k+1$
\ENDWHILE
\end{algorithmic}
\caption{Steepest\label{alg:sd} descent method for $\Wo_i f  =  g^\delta$.}
\end{algorithm}

\begin{algorithm}
\begin{algorithmic}[1]
\STATE Initialize $f^\delta_0=0$; $r_0= g^\delta - \Wo_i f_0^\delta$; $d_0=\Wo_i^* r_0$; $k \gets 0$
\WHILE{stopping criteria not satisfied}
\STATE $\alpha_k =\| \Wo_i^* r_k\|_{\X_i}^2/ \|\Wo_i d_k\|_{\Y_i}^2$
\STATE $f^\delta_{k+1} = f^\delta_k  + \alpha_k \, d_k$
\STATE $r_{k+1} =  r_k -  \alpha_k \, \Wo_i d_k$
\STATE $\beta_k = \|\Wo_i^* r_{k+1} \|_{\X_i}^2/ \|\Wo_i^* r_k\|_{\X_i}^2$
\STATE $d_{k+1} = \Wo_i^* r_{k+1} + \beta_k \, d_k$
\STATE $k  \gets  k+1$
\ENDWHILE
\end{algorithmic}
	\caption{CGNE\label{alg:cgne} method for $\Wo_i f  =  g^\delta$.}
\end{algorithm}

\subsection{Well posed case: Linear convergence of iterative methods}

When the  linear inverse problem $\Wo f  =  g$ is well-posed,
then Landweber's,  the steepest descent, and the CG methods applied to
$g^\delta$ converge  to a minimizer of
\begin{equation} \label{eq:res}
	\Phi_{0}  \colon \X_i \to \R \colon  f
	\mapsto \frac{1}{2} \snorm{\Wo_i f - g^\delta}_{\Y_i}^2 	
\end{equation}
with a linear rate of convergence (for both realizations $\Wo_i \colon \X_i \to \Y_i$ of  $\Wo$).
Here, we assume that $f \in \dom(\Wo_i)$ and $g^\delta \in \Y_i$ is such that $\norm{\Wo_i f - g^\delta}_{\Y_i} < \delta$.
For convenience of the reader the steepest  descent and the
CG iteration are  recalled  in  Algorithms~\ref{alg:sd} \&
\ref{alg:cgne}. The Landweber's method is the same as the
steepest descent method with the modification that  the step size $\gamma_k$ is 
replaced by a constant value $\gamma$ satisfying  $0< \gamma < 2 / \snorm{\Wo_i^*\Wo_i}$.
Theorem~\ref{T:well-posed} implies the following  result.

\begin{theorem} \label{thm:iter}
Assume that the visible condition holds and let $\chi \equiv 1$ on $S_1 \times [0,T]$,
where $S_1$ is a closed subset of $\partial \Om$ such that $S_0 \subset \interior(S_1)$ and $S_1  \subset \interior(S)$.
\begin{itemize}[topsep=0em]
\item
For any $g^\delta \in \Y_i$, the  Landweber, the steepest  descent and the CG iteration
converge linearly to the unique minimizer $f^\delta$ of \eqref{eq:res}. More precisely, there is a
constant $a < 1$ (only depending on the realization  and the iterative method) such that the iterates $f_k^\delta$  defined  by either method satisfy
$\snorm{f^\delta  -  f^\delta_k }_{\X_i}  \leq a^k \snorm{f^\delta}_{\X_i}$ for
$k \in \N$.

\item
For $\delta = 0$, the limit $f^0$ is the unique solution of $\Wo_i f  =  g$.
Moreover, we have $\snorm{f - f^\delta}_{\X_i} \leq  C \delta$, where $C$ is the constant appearing in
Theorem~\ref{T:well-posed}.
 \end{itemize}
\end{theorem}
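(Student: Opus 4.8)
The plan is to convert the stability estimate of Theorem~\ref{T:well-posed} into a spectral bound on the normal operator $A \coloneqq \Wo_i^* \Wo_i$ and then invoke the standard convergence theory for symmetric positive definite problems. Taking the adjoint with respect to the $\X_i$- and $\Y_i$-inner products, $A$ is self-adjoint and, by \eqref{eq:wellposed}, satisfies $\langle A f, f\rangle_{\X_i} = \snorm{\Wo_i f}_{\Y_i}^2 \geq C^{-2}\snorm{f}_{\X_i}^2$, while the boundedness of $\Wo_i$ gives $\langle A f, f\rangle_{\X_i}\leq \snorm{\Wo_i}^2\snorm{f}_{\X_i}^2$. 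Hence the spectrum of $A$ lies in $[\mu, L]$ with $\mu = C^{-2}>0$ and $L = \snorm{\Wo_i}^2$, so $A$ is boundedly invertible with finite condition number $\kappa = L/\mu = C^2\snorm{\Wo_i}^2$. I note in passing that this also shows $\ran(\Wo_i)$ is closed: if $\Wo_i f_n$ is Cauchy, so is $(f_n)$ by \eqref{eq:wellposed}. Consequently the residual functional \eqref{eq:res} is strictly convex and coercive, hence has a unique minimizer $f^\delta$, characterized by the normal equation $A f^\delta = \Wo_i^* g^\delta$.

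With these bounds in hand, the linear convergence of all three schemes is classical. For Landweber's method with $0<\gamma<2/L$ the error $e_k\coloneqq f^\delta - f_k^\delta$ obeys the exact recursion $e_{k+1} = (I-\gamma A)e_k$, whence $\snorm{e_k}_{\X_i}\leq \snorm{I-\gamma A}^k\snorm{e_0}_{\X_i}$ with $\snorm{I-\gamma A} = \max\set{\abs{1-\gamma\mu},\abs{1-\gamma L}}<1$. Because the iterations start at $f_0^\delta = 0$, we have $e_0 = f^\delta$, which yields the claimed form with $a = \snorm{I-\gamma A}$. For the steepest descent and CG iterations of Algorithms~\ref{alg:sd} and~\ref{alg:cgne} I would simply cite the standard estimates (see \cite{engl1996regularization,kaltenbacher2008iterative} and \cite{hanke1995conjugate} for CG), which give linear convergence in the energy norm $\snorm{\edot}_A$ with rates $(L-\mu)/(L+\mu)$ and $(\sqrt\kappa-1)/(\sqrt\kappa+1)$ respectively, both $<1$ since $\mu>0$. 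The only bookkeeping point is to transfer these to the $\X_i$-norm via the equivalence $\mu\snorm{\edot}_{\X_i}^2\leq\snorm{\edot}_A^2\leq L\snorm{\edot}_{\X_i}^2$; absorbing the resulting $O(\sqrt\kappa)$ prefactor into a slightly enlarged rate $a<1$ produces the stated bound.

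For the second assertion, take $\delta = 0$, i.e. $g^\delta = g = \Wo_i f$. Then $\Phi_0\geq 0$ vanishes at $f$, so $f$ minimizes \eqref{eq:res}, and uniqueness follows from the injectivity of $\Wo_i$; thus $f^0 = f$ is the unique solution of $\Wo_i f = g$. For the error bound, observe that $\Wo_i f^\delta$ is the orthogonal projection $P g^\delta$ of $g^\delta$ onto the closed subspace $\ran(\Wo_i)$. Since $g = \Wo_i f \in \ran(\Wo_i)$ satisfies $Pg = g$, we obtain $\snorm{g - \Wo_i f^\delta}_{\Y_i} = \snorm{P(g-g^\delta)}_{\Y_i}\leq \snorm{g-g^\delta}_{\Y_i}<\delta$. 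Applying Theorem~\ref{T:well-posed} to $f - f^\delta$ then gives $\snorm{f-f^\delta}_{\X_i}\leq C\snorm{\Wo_i(f-f^\delta)}_{\Y_i}\leq C\delta$, as claimed.

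The genuine mathematical content sits entirely in Theorem~\ref{T:well-posed}; once the spectral gap $\mu>0$ is available, everything reduces to textbook iteration theory. Accordingly, I expect the main (and only mildly delicate) obstacle to be purely technical: passing from the energy-norm convergence rates natural to steepest descent and CG to the asserted clean bound $\snorm{f^\delta - f_k^\delta}_{\X_i}\leq a^k\snorm{f^\delta}_{\X_i}$ in the $\X_i$-norm with a single rate constant $a<1$, which requires the norm-equivalence step above together with a harmless adjustment of $a$ to swallow the condition-number-dependent constant factor.
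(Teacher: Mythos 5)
Your overall route is the same as the paper's: Theorem~\ref{T:well-posed} renders the problem well-posed, and one then appeals to the classical convergence theory for Landweber, steepest descent and CG (the paper does exactly this, in two sentences, by citation). Your filled-in details are mostly correct and go beyond what the paper records: the spectral bounds $\mu=C^{-2}\le A\le L=\snorm{\Wo_i}^2$ for $A=\Wo_i^*\Wo_i$, the closed-range observation, the exact Landweber error recursion $e_{k+1}=(I-\gamma A)e_k$, and the second bullet (characterizing $\Wo_i f^\delta$ as the orthogonal projection of $g^\delta$ onto $\ran(\Wo_i)$ and then invoking the stability estimate) are all sound.

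There is, however, one step that fails as written, and it is precisely where the clean bound $\snorm{f^\delta-f^\delta_k}_{\X_i}\le a^k\snorm{f^\delta}_{\X_i}$ has its only real content: ``absorbing the resulting $O(\sqrt\kappa)$ prefactor into a slightly enlarged rate $a<1$.'' If all you know is $\snorm{e_k}_{\X_i}\le M\rho^k\snorm{e_0}_{\X_i}$ with $M>1$, then at $k=1$ you need $a\ge M\rho$, and no choice of $a<1$ exists once $M\rho\ge 1$ (which happens for large condition number $\kappa$); a multiplicative constant exceeding $1$ can never be swallowed by enlarging the rate, because the obstruction sits at the first iterate, not asymptotically. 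For steepest descent the gap is fixable by a per-step argument carried out directly in the $\X_i$-norm: with $e_{k+1}=(I-\gamma_k A)e_k$ and the exact line search value of $\gamma_k$, Cauchy--Schwarz (applied to $y=A^{1/2}e_k$) gives $\gamma_k\snorm{Ae_k}_{\X_i}^2\le\langle Ae_k,e_k\rangle_{\X_i}$, hence $\snorm{e_{k+1}}_{\X_i}^2\le\snorm{e_k}_{\X_i}^2-\gamma_k\langle Ae_k,e_k\rangle_{\X_i}\le\kl{1-\kappa^{-1}}\snorm{e_k}_{\X_i}^2$, a genuine per-step contraction with $a=\sqrt{1-\kappa^{-1}}$. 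For CG no such per-step plain-norm contraction is available off the shelf: the classical results give per-step contraction only in the energy norm $\snorm{\Wo_i\,\cdot\,}_{\Y_i}$ (rate $(\kappa-1)/(\kappa+1)$, by comparing the CG iterate with a single gradient step inside the Krylov space), while plain-norm statements carry a constant in front. So for CG you must either prove the estimate in that norm or state it with a prefactor --- which is in fact all that the references cited by the paper provide, so on this point your proposal is no less rigorous than the paper's own two-line proof, but the absorption argument should be deleted rather than repaired by adjusting $a$.
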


\begin{proof}
Theorem~\ref{T:well-posed} shows that the inverse problem is well-posed. The above results follow directly from the standard theory of iterative methods \cite{hanke1995conjugate,engl1996regularization,kaltenbacher2008iterative}.
\end{proof}

Theorem~\ref{thm:iter} shows that with our choices of mapping spaces, the Landweber's,  steepest descent, and 
CG methods converge linearly in the $L^2$-norm as well as the $H^1$-norm.

\subsection{Ill-posed case: regularization}

Now consider the  situation where the visibility  condition   does not hold.
Then one has to apply regularization methods.

\paragraph{Iterative regularization methods:}

We  consider the Landweber, the steepest descent and the CG methods
combined with Morozov's discrepancy principle.
According to the discrepancy principle,
the iteration is terminated at the index  $$k(\delta, g^\delta) = \argmin \set{ k  \in \N \colon  \snorm{ \Wo_i f_{k+1}^\delta  -  g^\delta }_{\X_i}  \leq \tau \delta}$$ with some fixed $\tau >1$.

\begin{theorem}
Suppose $f \in   \X_i$, $\delta>0$, let  $g^\delta \in \Y_i$
satisfy $\snorm{g^\delta - \Wo  f}_{\Y_i}  \leq \delta$ and
define $(f_k^\delta)_{k\in \N}$ by either the Landweber,
steepest  descent or the CG iteration.

\begin{enumerate}
\item \emph{Exact data:} If $\delta=0$, then
$ \norm{ f_k-f}_{\X_i}  \to 0 $ as $k \to \infty$.

\item \emph{Noisy data:}
Let $(\delta(m))_{m\in \N} \in (0, \infty)^\N$ converge  to zero and let
$(g_m)_{m\in \N} \in \Y_i$
satisfy $\snorm{g_m- \Wo f}_{\Y_i} \leq \delta(m)$.
Then the following hold:
\begin{itemize}
\item The stopping indices $k_*(\delta(m), g_m) $  are  well defined;
 \item
 We have  $\snorm{f_{k_*(\delta(m), g_m)}^{\delta(m)} - f}_{\X_i} \to 0 $ as $m\to \infty$.
\end{itemize}
\end{enumerate}
\end{theorem}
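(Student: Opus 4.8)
The plan is to reduce the entire statement to the abstract theory of iterative regularization methods for bounded linear operators between Hilbert spaces. The only structural facts I would use are that $\Wo_i \colon \X_i \to \Y_i$ is a bounded linear operator between two Hilbert spaces (as established earlier) and that it is injective by our standing assumption, with adjoint $\Wo_i^*$ given explicitly by Theorem~\ref{T:adjoint}. Since the exact data satisfies $g = \Wo_i f$ with $f \in \X_i$, we have $g \in \ran(\Wo_i) \subset \dom(\Wo_i^\dagger)$, so the Moore--Penrose solution $\Wo_i^\dagger g$ exists; injectivity of $\Wo_i$ then forces $\Wo_i^\dagger g = f$. Thus it suffices to invoke, method by method, the corresponding convergence theorem for the discrepancy principle.

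First I would treat Landweber's method and steepest descent, where the framework is entirely classical (see \cite[Chapters~6 and~7]{engl1996regularization} and \cite{kaltenbacher2008iterative}). The only hypothesis to check is admissibility of the step size: for Landweber this is the condition $0 < \gamma < 2/\snorm{\Wo_i^*\Wo_i}$ already imposed in the algorithm, while for steepest descent the $\gamma_k$ are self-adjusting. The residuals $\snorm{\Wo_i f_k^\delta - g^\delta}_{\Y_i}$ decrease monotonically, which guarantees that the discrepancy stopping index $k_*(\delta, g^\delta)$ is well defined and finite whenever $\delta > 0$ and $\tau > 1$. For exact data the iterates converge to $\Wo_i^\dagger g = f$, and combining this with the discrepancy principle yields the regularization property $\snorm{f_{k_*}^{\delta} - f}_{\X_i} \to 0$ as $\delta \to 0$.

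The delicate case will be the CG iteration, because the iterates depend \emph{nonlinearly} on the data $g^\delta$, so the spectral filter arguments used for the two linear methods no longer apply directly. For this I would rely on the dedicated analysis of CG as a regularization method combined with the discrepancy principle, due to Hanke \cite{hanke1995conjugate} (see also \cite{kaltenbacher2008iterative}). The required input is once more only that $\Wo_i$ is a bounded injective linear operator between Hilbert spaces, so the cited results transfer verbatim to our functional setting and deliver both the exact-data convergence and the convergence of the stopped iterates as the noise level vanishes.

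I expect the main obstacle to be conceptual rather than computational: one must confirm that our nonstandard inner products on $\X_i$ and $\Y_i$ (the $c$-weighted $L^2$ norm and the $\chi$-weighted boundary norm, respectively) do not disturb the Hilbert space structure underlying the abstract theory. Since each of $\enorm_{\X_i}$ and $\enorm_{\Y_i}$ is a genuine Hilbert space norm and $\Wo_i^*$ in Theorem~\ref{T:adjoint} is computed precisely with respect to these inner products, the abstract hypotheses are satisfied without modification, and no property of $\Wo_i$ beyond boundedness and injectivity is needed.
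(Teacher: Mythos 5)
Your proposal is correct and takes essentially the same approach as the paper: the paper's entire proof is the one-sentence reduction to the standard theory of iterative regularization methods, citing exactly the references you invoke (\cite{hanke1995conjugate,engl1996regularization,kaltenbacher2008iterative}). Your additional verifications---that the weighted norms still give genuine Hilbert spaces, that injectivity identifies the Moore--Penrose solution with $f$, and that CG requires Hanke's nonlinear analysis rather than linear spectral filtering---are precisely the details the paper leaves implicit.
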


\begin{proof}
The claims follow  from standard  results for iterative regularization methods
(see, for example, \cite{hanke1995conjugate,engl1996regularization,kaltenbacher2008iterative}).
\end{proof}

\paragraph{Variational (penalized) regularization methods:}

As an alternative  to iterative regularization methods we will apply generalized
Tikhonov regularization, which has the advantage  that  a-priori information
can be more easily explicitly incorporated.
In this work  we apply $H^1$-regularization  and TV-regularization,
\begin{align} \label{eq:H1}
	\Phi_2 (f)   &\coloneqq \frac{1}{2} \snorm{ \Wo  f -  g}^2_{\Y_0}  + \frac{\la}{2}  \,  \int_{\Om_0}  \abs{\nabla f}^2 \,,
	\\  \label{eq:TV}
	\Phi_1 (f)  &\coloneqq \frac{1}{2} \snorm{ \Wo  f -  g}^2_{\Y_0}  + \la \, \int_{ \Om_0} \abs{\nabla f}  \,,
\end{align}
respectively. Here $\la  > 0$ is the regularization  parameter and both functionals are considered as
mappings on $\X_0 = L^2 (\Omega_0)$. From the general theory of variational regularization
methods,     it follows  that \eqref{eq:H1} and \eqref{eq:TV}  again
yield regularization methods \cite{scherzer2009variational}.

For numerically minimizing the Tikhonov functionals  \eqref{eq:H1}
and \eqref{eq:TV},  we replace them by the discrete counterparts
    \begin{align} \label{eq:H1n}
	\Tnum_2 ( \fnum)
	&\coloneqq\frac{1}{2}  \snorm{ \Wnum  \fnum -  \gnum^\delta}_2^2  +
	\frac{\la}{2} \, \norm{  | \Dnum \fnum | }_2^2  \,,
	\\  \label{eq:TVn}
	\Tnum_1 (\fnum)
	&\coloneqq \frac{1}{2} \snorm{ \Wnum  \fnum -  \gnum^\delta}_2^2
	+ \la \, \norm{  | \Dnum \fnum | }_1 \,.
\end{align}
Here $\fnum \in \R^N$, $\gnum^\delta \in \R^M$,
$\Wnum \colon \R^N \to \R^M$ is the discretization of the forward operator and
  $\Dnum \colon \R^N \to \R^{N}  \times \R^{N}$ denotes the discrete gradient.
  The functional \eqref{eq:H1n} is quadratic and can be minimized, for example, with the  steepest  descent or the CG iteration.
The discrete  TV problem~\eqref{eq:TVn} can also be minimized  by  various methods.
In this work we use the minimization algorithm  of \cite{sidky2012convex}, which is a special instance of the  Chambolle-Pock algorithm \cite{chambolle2011} and summarized in  Algorithm~\ref{alg:tv}.

\begin{algorithm}
\caption{\label{alg:tv} Algorithm for minimizing \eqref{eq:TVn}}
\label{alglstv}
\begin{algorithmic}[1]
\STATE
$L \gets \|( \Wnum,  \Dnum)\|_2$;
$\tau \coloneqq 1/L$;
$\sigma \coloneqq 1/L$;
$\theta \coloneqq 1$;
$ k \gets 0$
\STATE initialize $\fnum_0$, $\pnum_0$, and $\qnum_0$ to zero values
\STATE $\unum_0 \gets \fnum_0$
\WHILE{stopping criteria not satisfied}
\STATE $\pnum_{k+1} \gets (\pnum_k + \sigma( \Wnum \unum_k - \gnum^\delta))/(1+\sigma)$
\STATE $\qnum_{k+1} \gets \lambda (\qnum_k  + \sigma \Dnum \unum_k )/
\max \set{ \lambda \mathbf{1},|\qnum_k + \sigma \Dnum \unum_k |}$
\STATE $\fnum_{k+1} \gets  \fnum_k - \tau \Wnum^\trans \pnum_{k+1} + \tau \Dnum^\trans  \, \qnum_{k+1}$
\STATE $\unum_{k+1} \gets \fnum_{k+1} + \theta(\fnum_{k+1} - \fnum_k)$
\STATE $k \gets k+1$
\ENDWHILE
\end{algorithmic}
\end{algorithm}

\section{Numerical examples}
\label{S:Numerical}

In this section we present numerical examples for full   data  (well-posed case)
as well as for limited view data (ill-posed case).
For both cases we take $\Om = [-1,1]^2$  and $\Om_0 = B_{0.9}(0)$, the ball with radius 0.9 centered at the origin. We also assume variable sound speed  and  variable attenuation
profile. We  consider the realization of the operator
$\Wo = \Wo_0 \colon \X_0 \to \Y_0$ using the $L^2$-norm.
For the forward and the adjoint equations, the wave equation
is solved with  a variant of the $k$-space method that is described in
Appendix~\ref{app:kspace}. The $k$-pace method yields solutions
that are periodic with period determined by the size of  the computational
domain. To avoid effects of periodization in all numerical  simulations
the domain $\Om = [-1,1]^2$ is embedded in a larger computational domain
$[-2,2]^2$.

 \begin{figure}[tbh!]\centering
\includegraphics[width =\columnwidth]{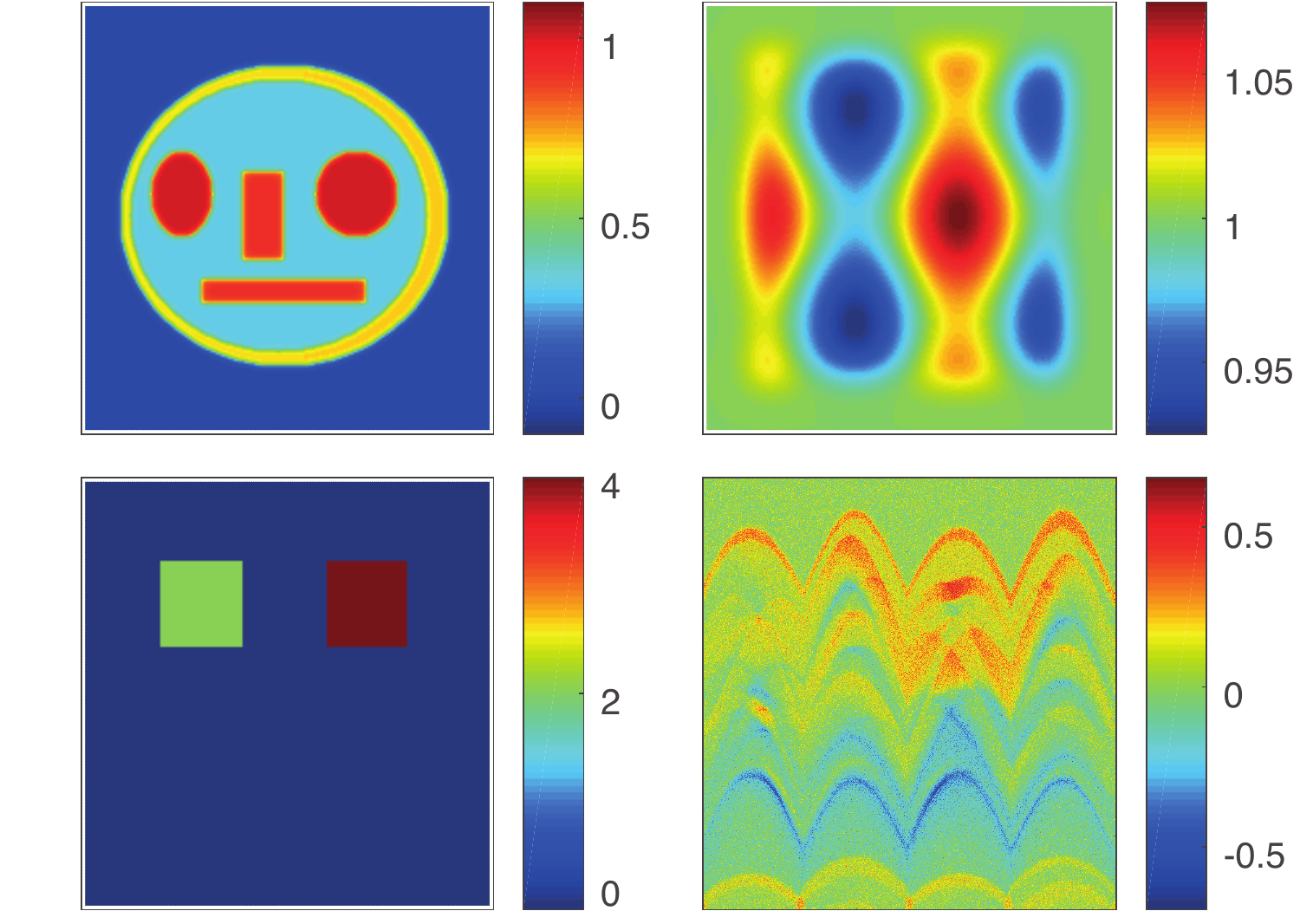}
\caption{Phantom (top left), variable sound speed (top right), variable attenuation coefficient (bottom left) and data with added noise (bottom right).\label{fig:data}}
\end{figure}

The initial phantom, the  sound
speed and the  attenuation are shown in
Figure~\ref{fig:data}.  All these functions are represented
by discrete vectors in $ \R^{201 \times 201}$.
The  computed  data  $g \in  \R^{800 \times 501} $
corresponds to  discrete pressure values at the 800 boundary pixels on
$ \partial \Om $ and   $501$ equidistant time samples  in $[0,2.5]$.
The (full data) discrete forward operator    $\Wnum \colon \R^{201 \times 201}  \to  \R^{800 \times 501}$
is obtained  by restricting   the numerical  solution to
the boundary  pixels. The discretization $\Wnum^\trans \colon    \R^{800 \times 501} \to \R^{201 \times 201}$
of the  adjoint operator
is also computed using the $k$-space method.  In  order  to avoid inverse crime,
in all simulations we use a twice finer  discretization for the data simulation than for the  reconstruction (followed by restriction  to the $800 \times 501$ grid).

\subsection{Full view data (well-posed case)}

We first  study the well-posed case where the  data is given on the whole boundary. The standard iterative methods (Landweber, steepest descent and CG)
 are therefore  linearly convergent.

 \begin{figure}[tbh!]\centering
\includegraphics[width =0.48\columnwidth]{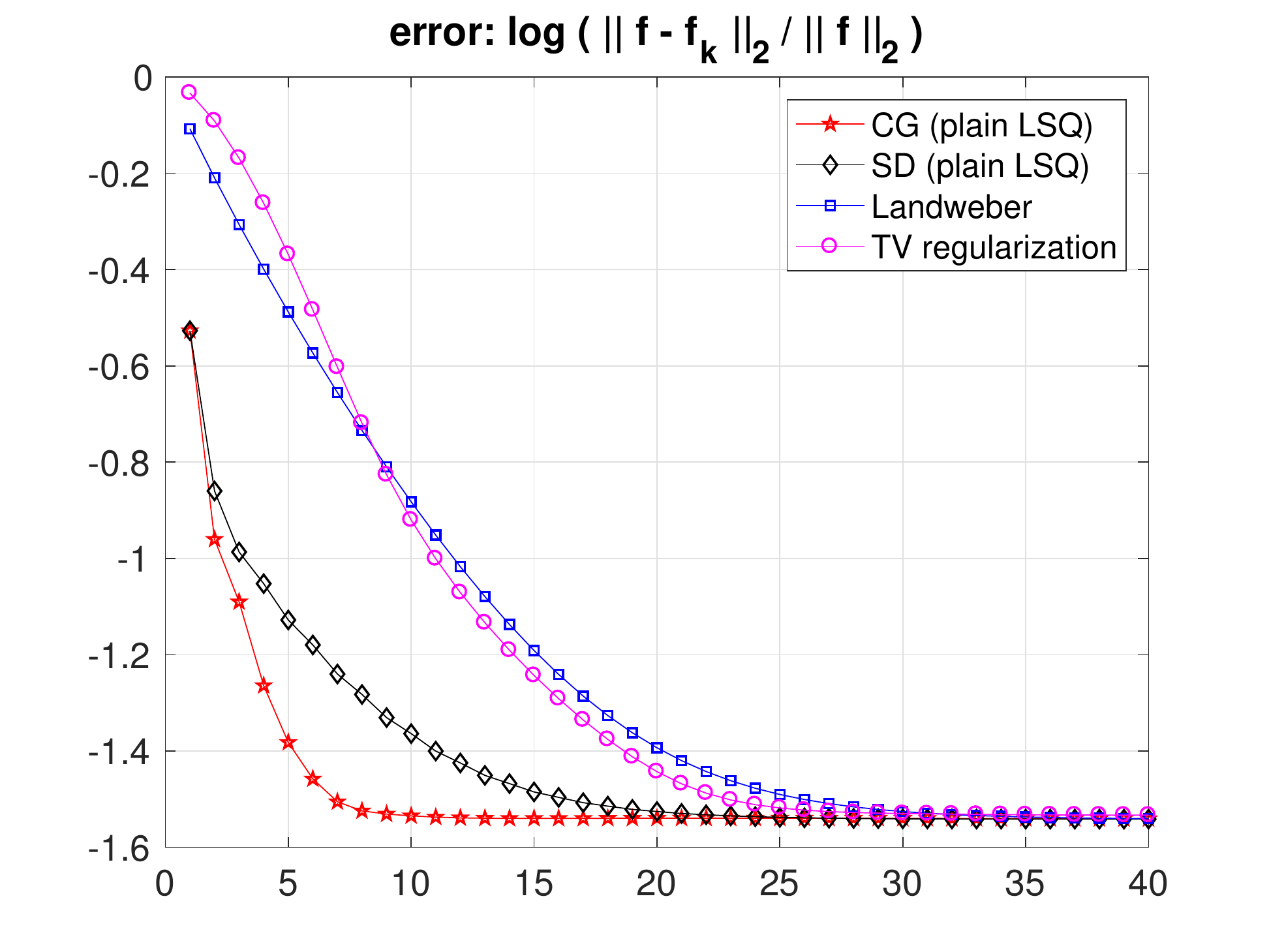}
\includegraphics[width =0.48\columnwidth]{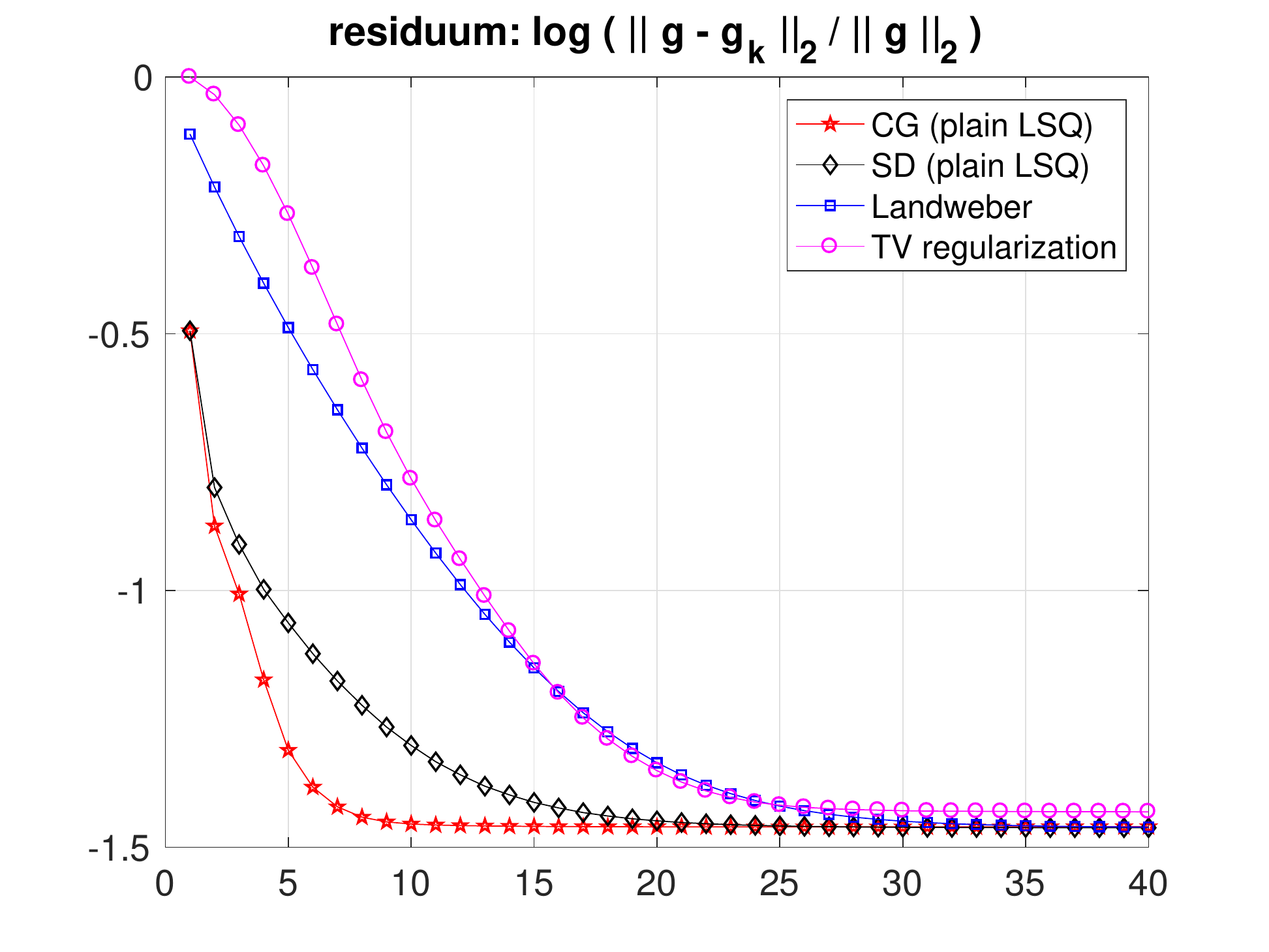}
\caption{Errors and residuals for the full data case  without added noise.\label{fig:err1}}
\end{figure}

\begin{figure}[tbh!]\centering
\includegraphics[width =\columnwidth]{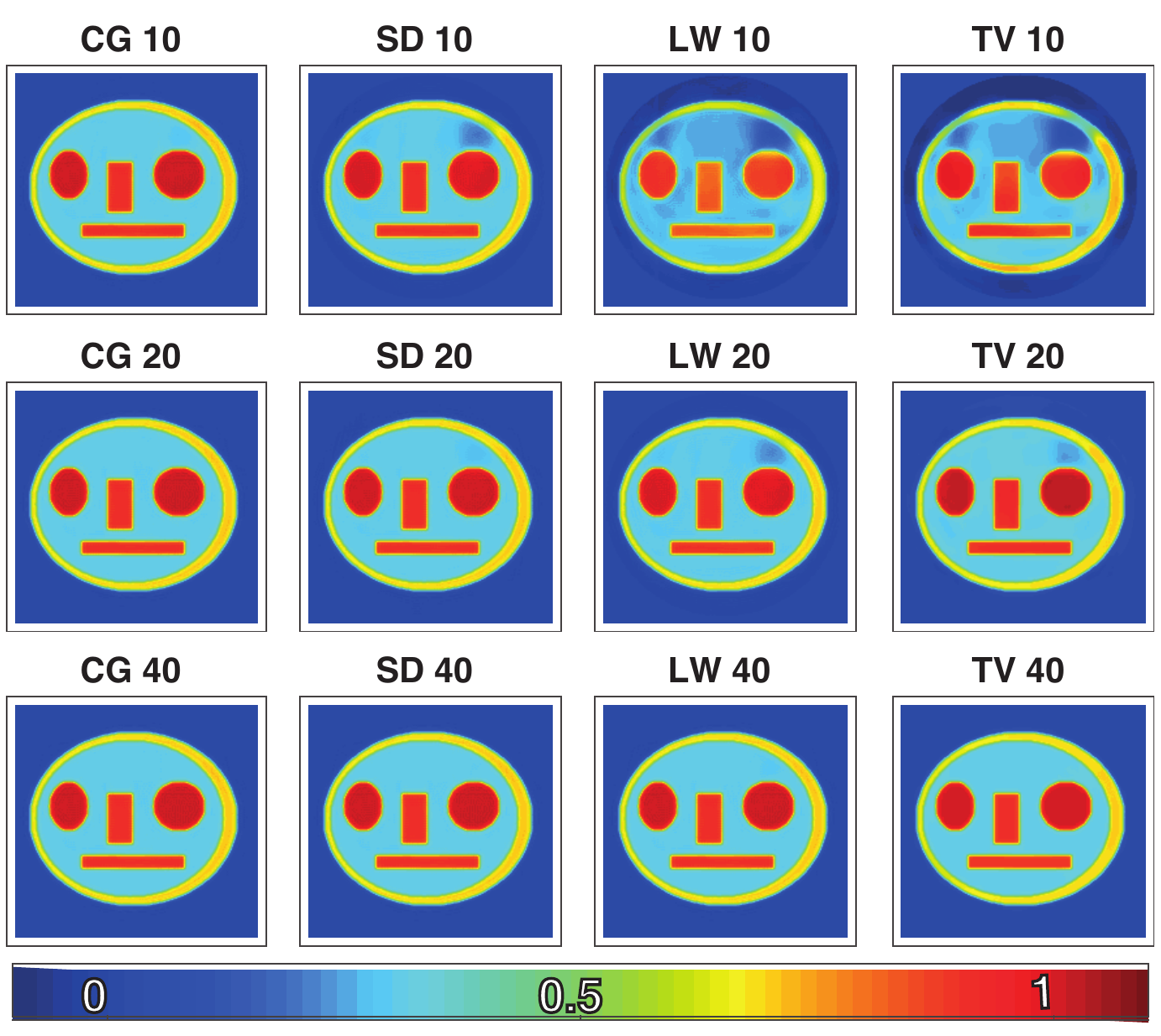}
\caption{Reconstructions after 10, 20 and 40 iterations
for the full data case  without added noise.\label{fig:rec1}}
\end{figure}

 \paragraph{Exact data:}
 
 Figure \ref{fig:err1} shows the   residuals and the
 relative $L^2$-reconstruction errors $\snorm{\fnum_k -\fnum}_2 / \snorm{\fnum}_2$  of the above  methods for the first $40$ iterates applied to simulated data.
  For comparison purpose, we also show results using the TV minimization
  algorithm with $\lambda =0.1$.   One observes that the error and the  residuals stagnate for all methods at some positive value after a certain number of iteration. This is because  the minimizer of
 $\snorm{ \Wnum  \fnum -  \gnum^\delta}_2^2$ is slightly different from the  exact solution $\fnum$ (since $g \neq g^\delta$, mainly due to the different data generation meshes).
 The CG method  is the fastest converging and  the  Landweber the slowest.
  In Figure \ref{fig:rec1}, we show  reconstructions of these methods after
 10, 20 and 40 iterations. All iterative methods have a similar behavior. In the initial iterations there are still artifacts contained in the pictures, and in later iterations the region with high attenuation value is underestimated. After more iterations, also this region is recovered correctly as well. The minimal reconstruction  error $\snorm{\fnum_k -\fnum}_2 / \snorm{\fnum}_2$  is  about $2.9\%$ and the minimal relative  residual $\snorm{\Wnum \fnum_k -\gnum}_2 / \snorm{\gnum}_2$ about $3.5\%$  for all methods.

 \begin{figure}[tbh!]\centering
\includegraphics[width =0.48\textwidth]{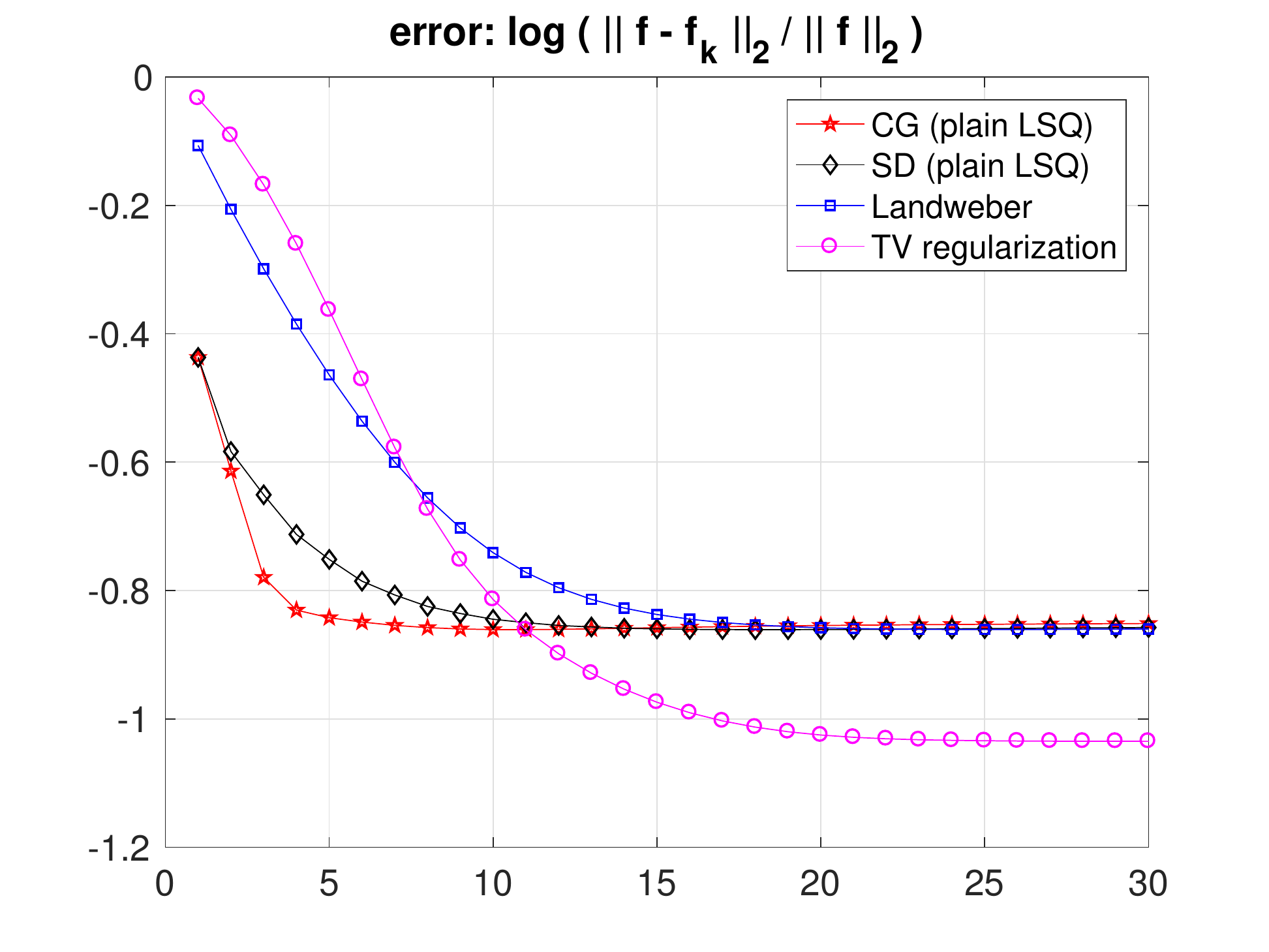}
\includegraphics[width =0.48\textwidth]{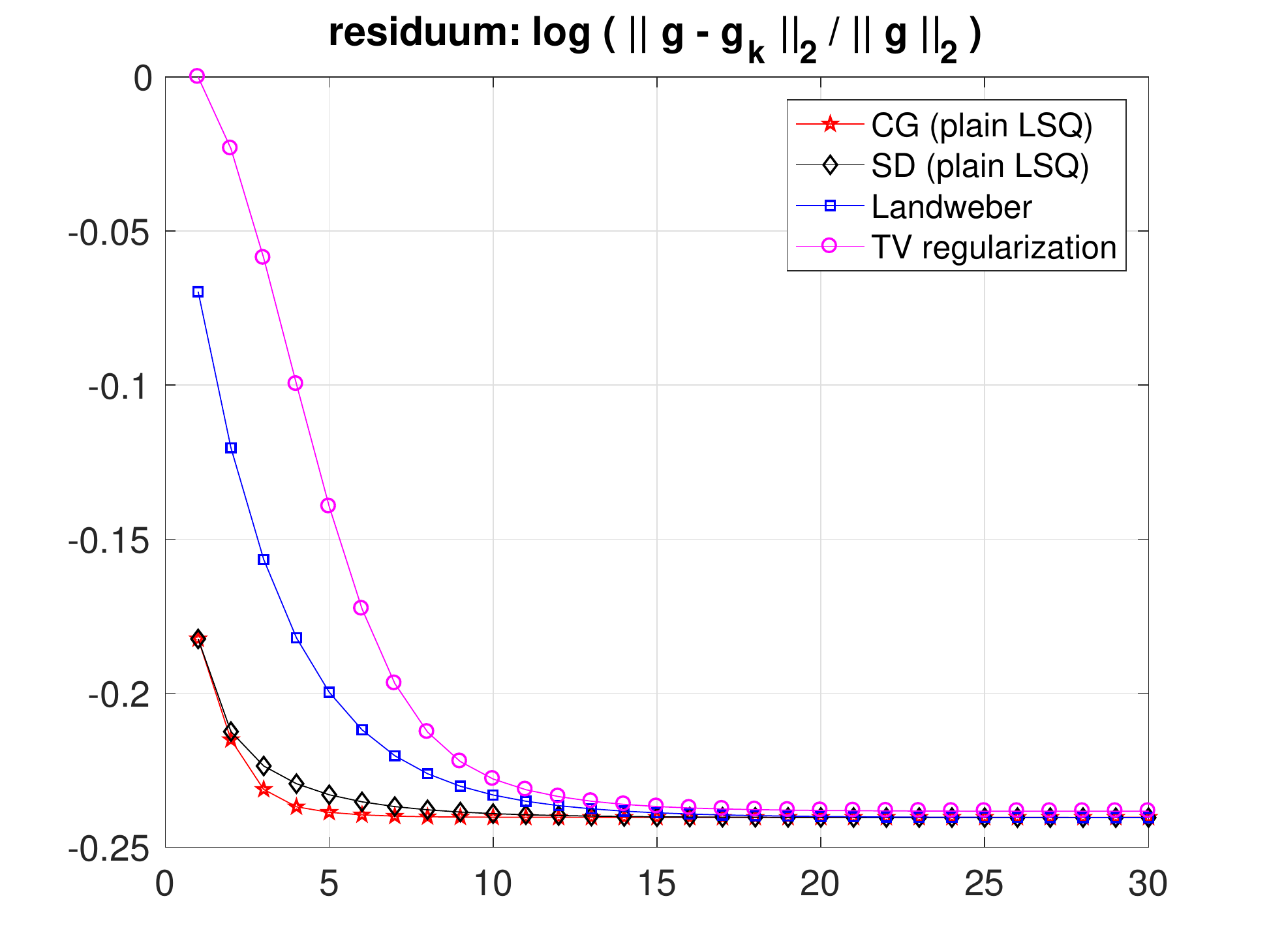}
\caption{Errors and residuals for the full data case with noise added.\label{fig:err1n}}
\end{figure}

\begin{figure}[tbh!]\centering
\includegraphics[width =\columnwidth]{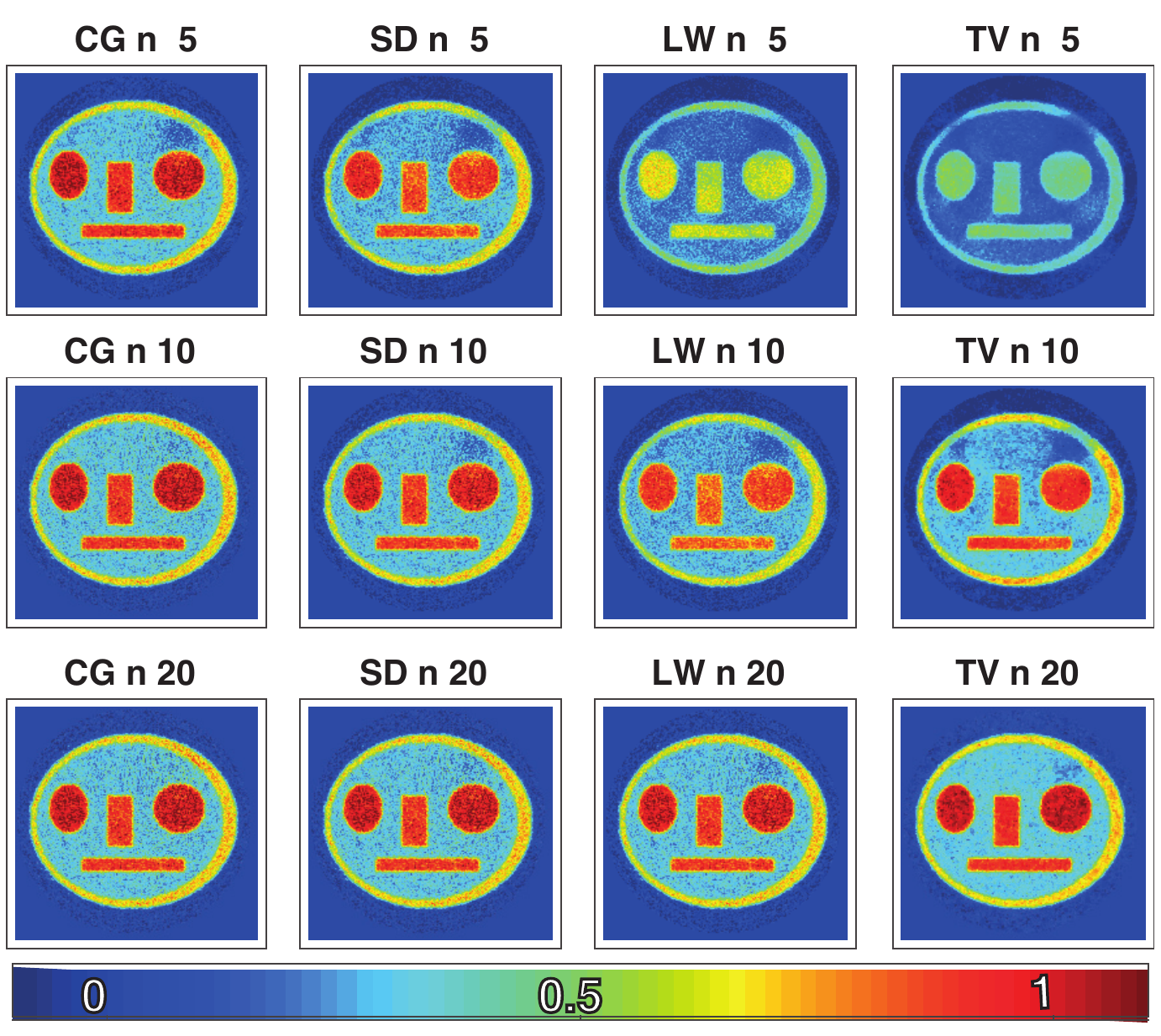}
\caption{Reconstructions after 5, 10 and 20 iterations for the full data case with noise added.\label{fig:rec1n}}
\end{figure}

\paragraph{Noisy data:}

In order to test  stability  with respect to noise  we repeated
the above simulations after  adding uniformly distributed Gaussian  noise
to the data with a relative error of about $59\%$. As can be seen from Figure~\ref{fig:err1n}, the convergence behavior is very similar to the exact data
case reflecting the well-posedness of the inverse problem.
Due to the added noise, the  minimal residuals and the minimal reconstruction errors are of
course much larger than in  exact data case. Reconstructions after 5, 10 and 20 iterations are shown in
Figure~\ref{fig:rec1n}. One observes good reconstruction results and robustness with respect the the noise.
The relative reconstruction errors after 20 iterations are   
about $14\%$, $13.8\%$, $13.9 \%$, $9.4 \%$ for CG, steepest decent, Landweber and 
TV minimization, respectively.    The the relative residuals are 
$57.5\%$,  $57.5\%$,   $57.6 \%$,    $57.88 \%$ which is about the relative data error.
One notes that  the relative reconstruction error is even smaller than the relative data error. This is probably  due to the redundancy of the PAT data. We conclude that in the full data case all methods have similar  stability and accuracy, but the CG is the fastest. Therefore in the case of full data we can suggest the CG method 
among the unpenalized iterative methods for image reconstruction. In the case of the piecewise constant  phantoms TV minimization seems to give better 
results in terms of $L^2$-reconstruction error.

\subsection{Limited view data (ill-posed case)}

Next we consider the limited data where the data are only given on the part of the boundary $\partial [-1,1]^2 $ determined by horizontal component being greater than $-0.25$. The visibility condition is  not satisfied and we are facing a severely ill-posed problem for which one requires a regularization method.
We  propose the steepest descent and CG method as iterative regularization methods and  $H^1$-regularization and TV-regularization as variational regularization methods.
For minimizing the $H^1$-functional  \eqref{eq:H1n} we use the steepest descent iteration which, in our simulations, turned out to be faster than the Landweber method and
more stable than the CG algorithm. For minimizing the TV-functional  \eqref{eq:TVn} we use the  minimization algorithm of \cite{sidky2012convex}. The regularization parameter in the variational methods is set to
$\lambda =0.1$.

\begin{figure}[tbh!]\centering
\includegraphics[width =0.48\textwidth]{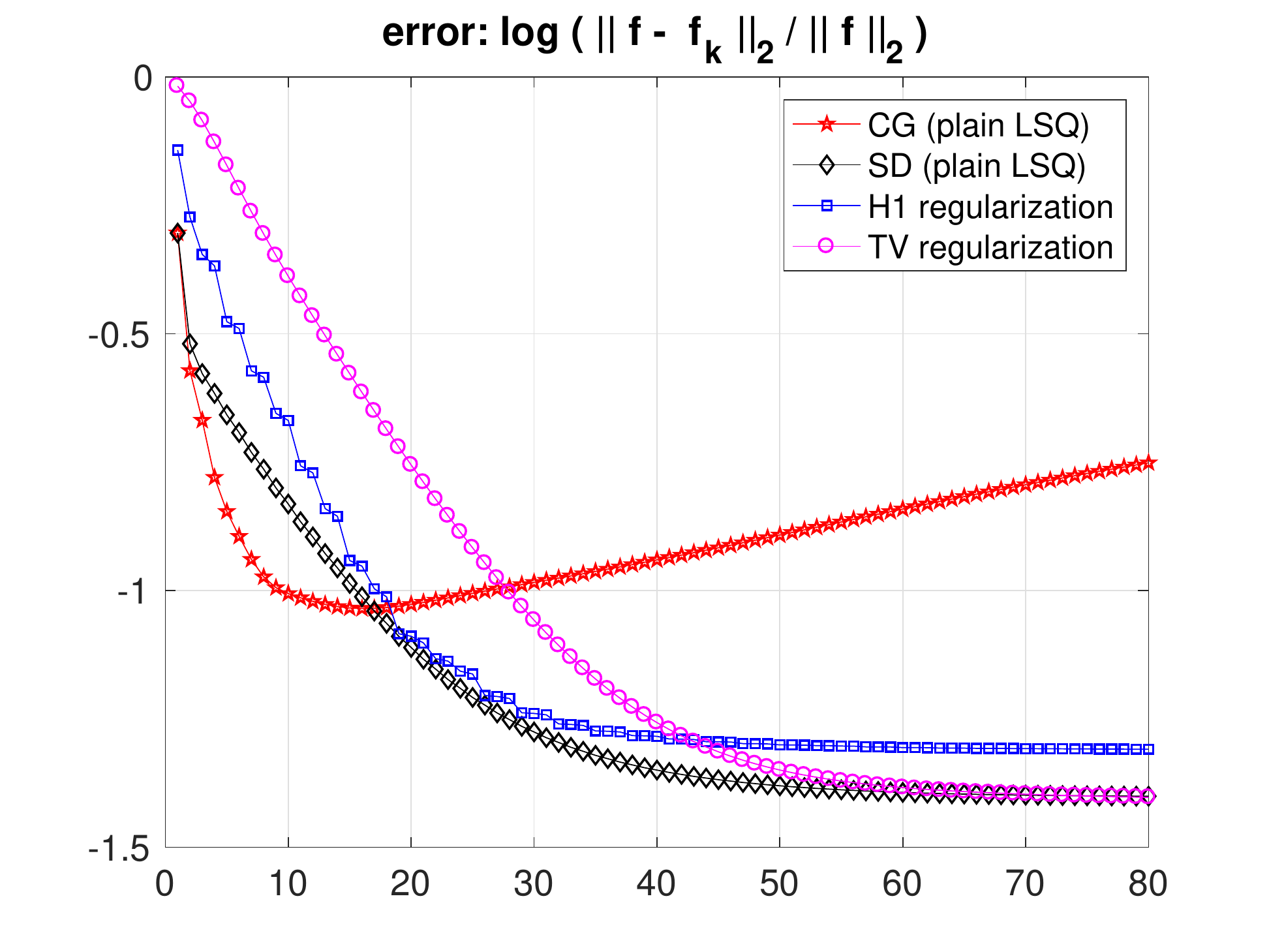}
\includegraphics[width =0.48\textwidth]{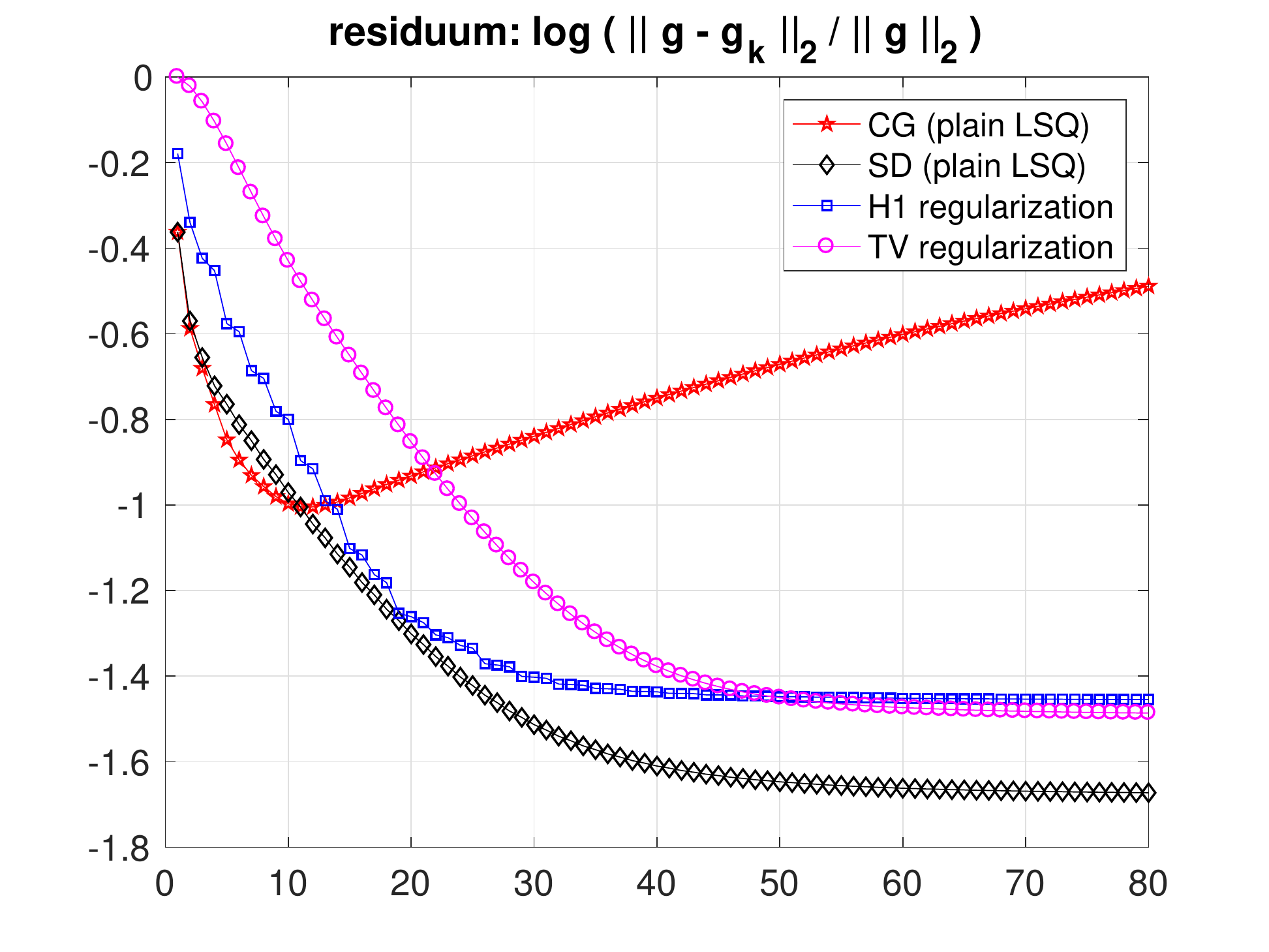}
\caption{Errors and residuals for the ill-posed partial data  case without noise. \label{fig:err2}}
\end{figure}

\begin{figure}[tbh!]\centering
\includegraphics[width=\columnwidth]{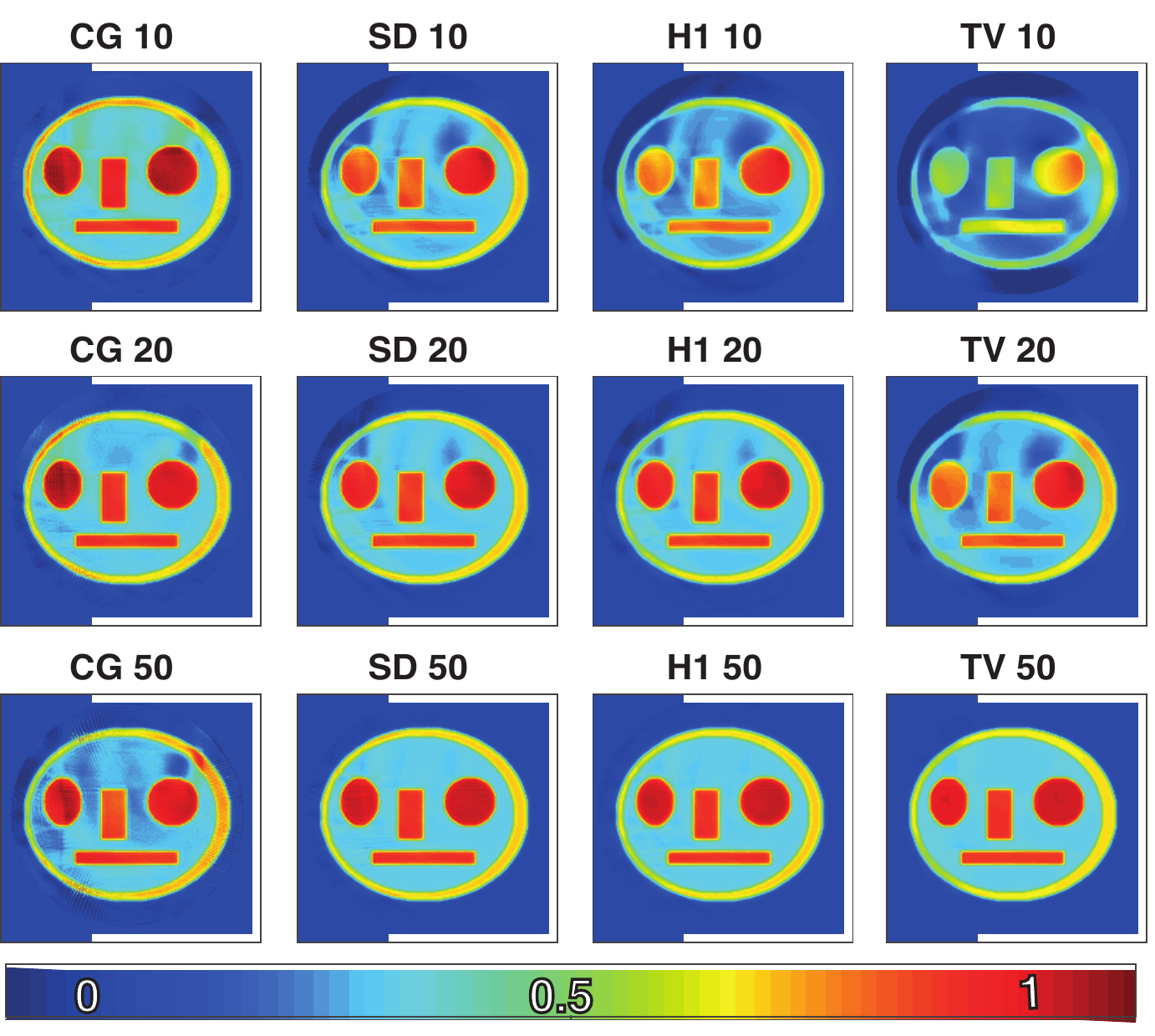}
\caption{Reconstructions after 10, 20 and 50 iterations for the ill-posed partial data  case without noise.\label{fig:rec2}}
\end{figure}

\paragraph{Exact data:}

We start by applying the above  schemes to  the simulated data. Figure \ref{fig:err2} shows the relative errors and relative residuals  for all 
methods on a logarithmic scale. In terms of relative reconstruction 
errors, the steepest descent and the TV algorithm perform best, whereby 
the steepest descent is faster  converging. Surprisingly, while the CG method 
again shows very rapid convergence in the initial iterations, it turns out to be unstable  in the ill-posed case.  Reconstruction results after $10$, $20$ and $50$ 
iterations are shown in the Figure~\ref{fig:rec2}. The relative $\ell^2$-reconstruction  error  after  50 iterations  for the CG iteration, the steepest descent iteration, $H^1$-regularization and TV-regularization are  are $12.8 \%$,   $4.2 \%$,    $5 \%$,  and $4.5 \%$, respectively. The corresponding (relative) residuals are  $21.3\%$,    $2.3 \%$,    $3.6 \%$,  and  $3.6 \%$.

    
    
\begin{figure}[tbh!]\centering
\includegraphics[width=0.48\textwidth]{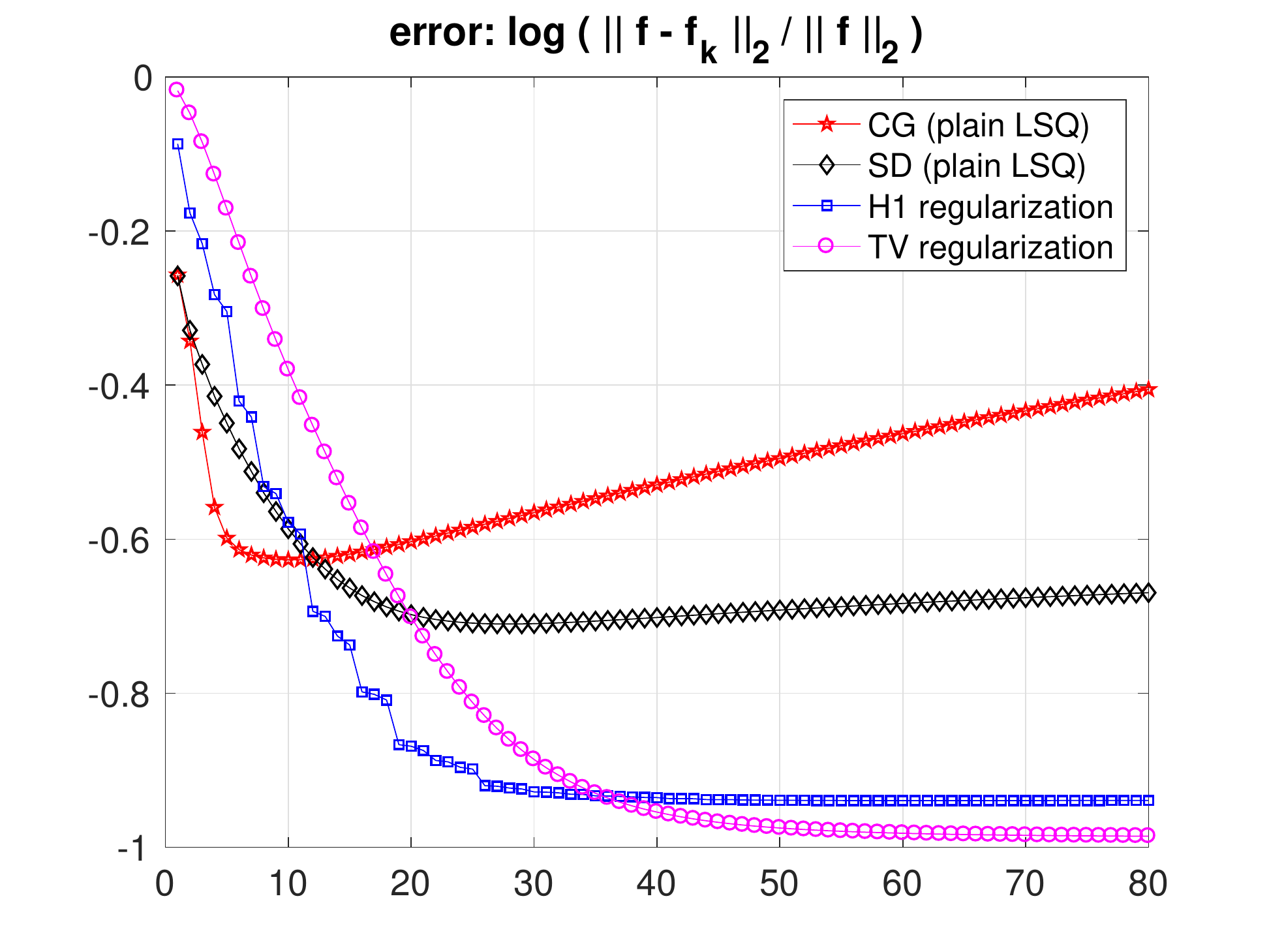} 
\includegraphics[width=0.48\textwidth]{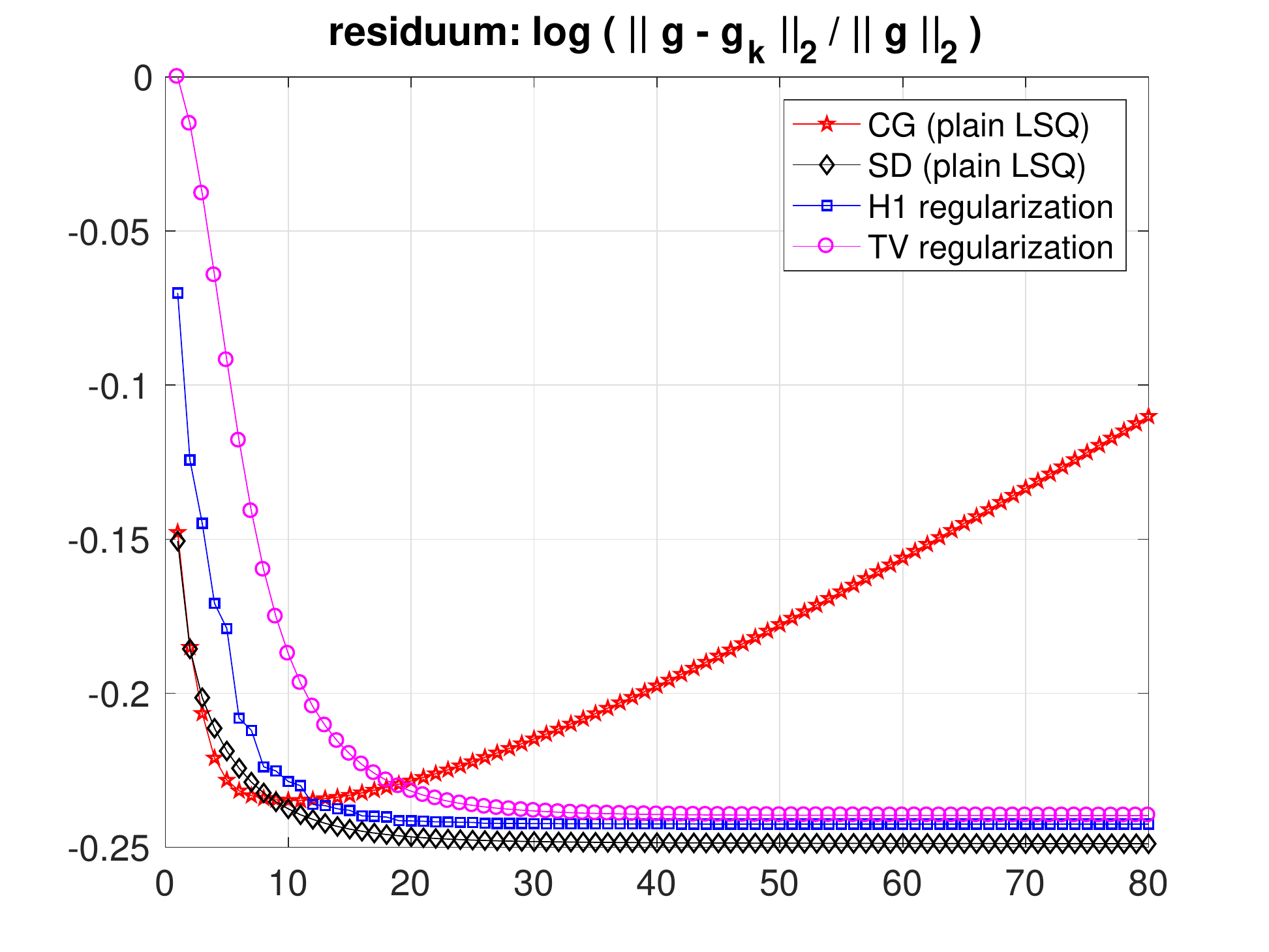}
\caption{Errors and residuals for the ill-posed partial data  case with noise. \label{fig:err2n}}
\end{figure}

\begin{figure}[tbh!]\centering
\includegraphics[width=\columnwidth]{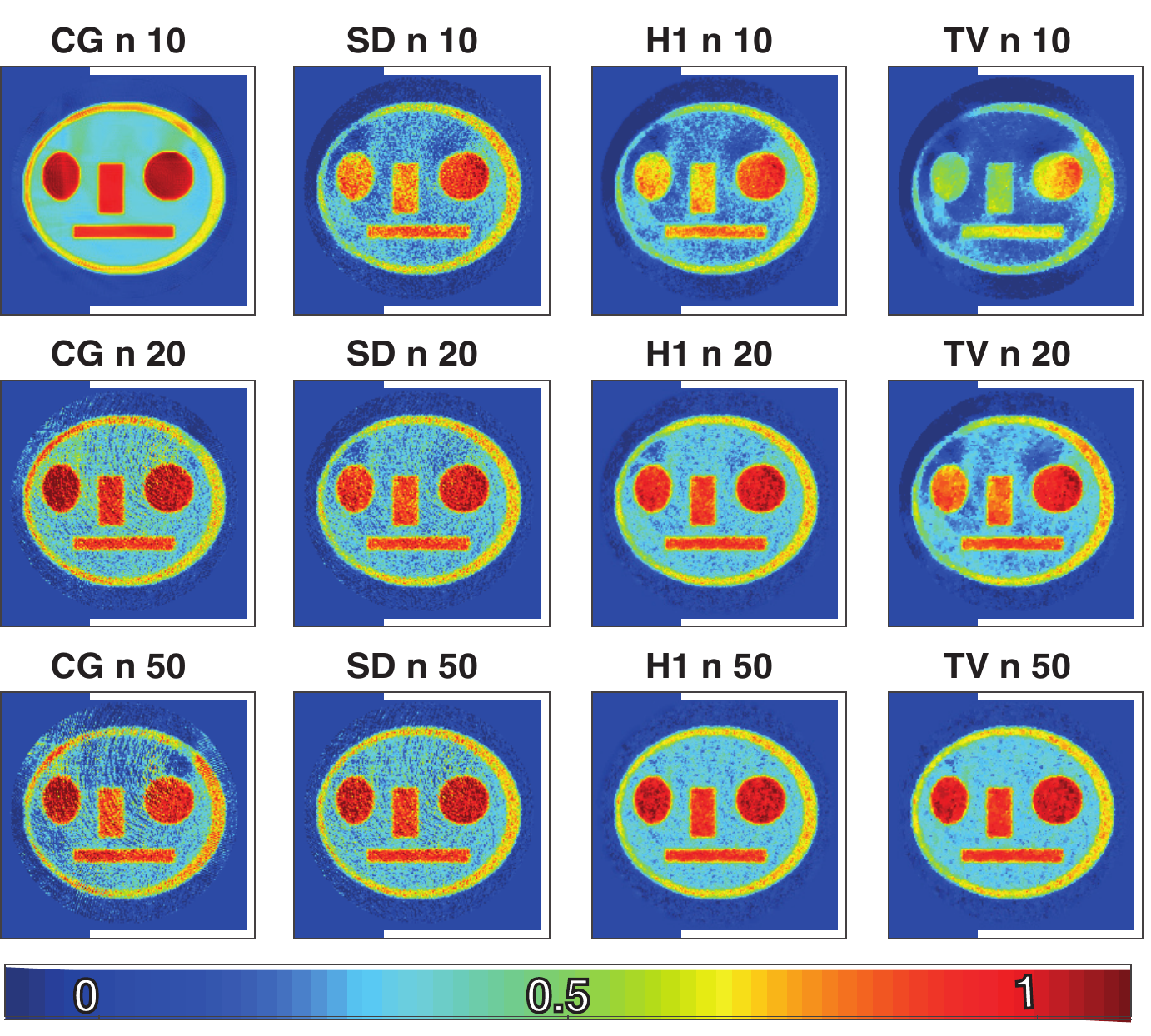}
\caption{Reconstructions after 10, 20 and 50 iterations for the ill-posed partial data  case with noise.\label{fig:rec2n}}
\end{figure}

\paragraph{Noisy data:}

The  methods from above are again applied, now to noisy data
with relative $\ell^2$-error about $59.7\%$.    The standard (unpenalized) iterative methods
 provide a regularization method when  combined with  early stopping.  In contrast, the  $H^1$- and TV-regularization methods converge to the minimizers of the corresponding Tikhonov functionals. Reconstruction results are shown in Figures~\ref{fig:err2n} and \ref{fig:rec2n}. In terms of reconstruction quality, TV-minimization is  the best method, followed by $H^1$-regularization.  The CG methods again behaves unstably and worse than the steepest descent method.      

The relative $\ell^2$-reconstruction  error  after  $50$ iterations  for the CG iteration, the steepest descent iteration, $H^1$-regularization and TV-regularization are  respectively $32 \%$,    $20.3\%$,    $11.5 \%$,  and $10.59 \%$. The corresponding residuals are  
$66.4 \%$,   $56.4 \%$,    $57.2 \%$,  and $57.6 \%$.

\section*{Acknowledgements}	

Linh Nguyen's research is partially supported by the NSF grants DMS 1212125 and DMS 1616904. Markus Haltmeier acknowledges support of the Austrian Science Fund (FWF), project P 30747-N32.

\appendix

\section{Appendix}
\subsection{Existence and uniqueness of adjoint equation} \label{A:well}
In this section, we prove the existence and uniqueness for the adjoint equation. Namely, consider the equation:
\begin{eqnarray} \label{E:Ap}
\left\{\begin{array}{ll} [c^{-2} \, \partial_{tt} + a \, \partial_t - \, \Delta] q =0,
& \mbox{for } (x,t) \in (\R^d \setminus \partial \Om) \times (0,T), \\[6 pt] q(0) =0, \quad q_t(0) =0, \\[6 pt]
\big[ q \big] =0, \Big[\frac{\partial q}{\partial \nu} \Big]= g. \end{array} \right.
\end{eqnarray}
\begin{definition} A function $q$ is a weak solution of (\ref{E:Ap}) if
\begin{itemize}
\item[i)]  $q \in L^2([0,T]; H^1(\R^d))$, $q' \in L^2([0,T];L^2(\R^d))$, $q'' \in L^2([0,T];H^{-1}(\R^d))$,
\item[ii)] $q(0) =0$ and $q_t(0) =0$, and
\item[iii)] for any function $\phi \in H_0^1(\R^d)$:
\begin{multline*} \int_{\R^d} c^{-2}(x) \, q_{tt}(x,t) \,\phi(x) \,dx  +   \int_{\R^d} a(x) \, q_{t}(x,t) \, \phi(x) \,dx  \\ + \int_{\R^d} \nabla q(x,t) \, \nabla \phi(x)\,dx = - \int_{\partial \Om}  \, g(y,t) \, \phi(y) \, dy,  \quad \mbox{a.e.~} t \in [0,T].
\end{multline*}
\end{itemize}
\end{definition} 
Let us note that from the above variational formulation, (\ref{E:Ap}) can be formally rewritten as the nonhomogeneous wave problem
\begin{eqnarray*}
\left\{\begin{array}{ll} [c^{-2} \, \partial_{tt} -a \, \partial_t - \, \Delta] q = -  \delta_{\pd \Om} \,  g , & \mbox{ on }  \R^d \times(0,T), \\[6 pt] q(0) =0, \quad q_t(0) =0, & \mbox{ on } \R^d. \end{array} \right.
\end{eqnarray*}
This formulation will be used for numerical simulation in Section~\ref{app:kspace}. Here are some results for equation (\ref{E:Ap}): 

\begin{theorem} \label{E:well-posed}
For any $$g \in L^2([0,T]; H^{1/2}(\partial \Om)) \cap H^1([0,T];H^{-1/2}(\partial \Om)),$$  equation (\ref{E:Ap}) has a unique weak solution. Moreover, 
\begin{itemize}
\item[i)] $q$ satisfies the finite speed of propagation property. Namely, let $c_+ := \max_{x \in \R^d} c(x)$, then $q(x,t) =0$ for any $(x,t) \in \Om^c \times [0,T]$ such that $dist(x,\partial \Om) \geq c_+ t$. 
\item[ii)] The following estimate holds \begin{equation} \label{E:est} \int_0^T \left[ \|q_t(t)\|^2 + \|q(t) \|^2_{H^1(\R^d)} \right] dt \ \leq C \|g'\|^2_{H^1([0,T];H^{-1/2}(\partial \Om))}. \end{equation}
Here, for simplicity, we use $\|\edot\|$ for the weighted $L^2$-norm with the weight $c^{-2}(x)$:
$$\|q_t(t)\|^2 = \int_{\R^d} c^{-2}(x) \, q^2_t(x,t) dx. $$  
\end{itemize}
\end{theorem}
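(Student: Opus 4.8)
The plan is to establish existence by the Faedo--Galerkin method, to read off the bound \eqref{E:est} from the resulting energy identity, and then to obtain uniqueness and the finite speed of propagation by localized energy arguments. I expect the genuine difficulty to lie in estimating the boundary source term in the energy balance.

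\emph{Galerkin scheme.} First I would fix smooth functions $(w_k)_{k\in\N}$ that are orthonormal in the weighted space $L^2(\R^d)$ with weight $c^{-2}$ and have dense span in $H^1(\R^d)$; each $w_k$ then has a trace in $H^{1/2}(\pd\Om)$. Seeking $q_m(t)=\sum_{k=1}^m d_k^m(t)\,w_k$, I impose the $m$ equations obtained by testing \eqref{E:Ap} against $w_1,\dots,w_m$,
\[
\langle c^{-2}q_m'',w_k\rangle + \langle a\,q_m',w_k\rangle + \langle\nabla q_m,\nabla w_k\rangle = -\langle g,\, w_k|_{\pd\Om}\rangle_{\pd\Om},
\]
together with $q_m(0)=q_m'(0)=0$. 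This is a linear second-order ODE system with $L^2$ coefficients, so it possesses a unique solution on $[0,T]$.

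\emph{Energy estimate (the crux).} Testing with $q_m'$ and summing yields
\[
\tfrac12\frac{d}{dt}\Big[\|q_m'\|^2+\|\nabla q_m\|_{L^2}^2\Big] + \langle a\,q_m',q_m'\rangle = -\langle g,\, q_m'|_{\pd\Om}\rangle .
\]
Since $a\ge0$ the third term only helps, so after integrating in time the one dangerous quantity is the boundary term, which cannot be bounded directly because $q_m'$ carries no trace controlled by the energy. The key maneuver is to integrate by parts in $t$, using $q_m(0)=0$:
\[
-\int_0^t\langle g,\,q_m'|_{\pd\Om}\rangle\,ds = -\langle g(t),\,q_m(t)|_{\pd\Om}\rangle + \int_0^t\langle g',\,q_m|_{\pd\Om}\rangle\,ds .
\]
Now only traces of $q_m(t)\in H^1(\R^d)$ occur; by the trace theorem these lie in $H^{1/2}(\pd\Om)$ and pair with $g(t),g'(t)\in H^{-1/2}(\pd\Om)$, which is exactly where the hypothesis $g\in H^1([0,T];H^{-1/2}(\pd\Om))$ is used. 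Bounding $\|q_m(t)|_{\pd\Om}\|_{H^{1/2}}\le C\|q_m(t)\|_{H^1}$, absorbing the $\|\nabla q_m(t)\|^2$ contribution into the left-hand side by Young's inequality, controlling $\|q_m(t)\|_{L^2}^2\le T\int_0^t\|q_m'\|^2\,ds$, and applying Gronwall's lemma produces an $m$-uniform bound of $q_m$ in $L^\infty([0,T];H^1)$ and $q_m'$ in $L^\infty([0,T];L^2)$ in terms of the norm of $g$; integrating in $t$ is the mechanism behind \eqref{E:est}. Reading the equation as $c^{-2}q_m''=\Delta q_m-a\,q_m'-\delta_{\pd\Om}g$ then bounds $q_m''$ in $L^2([0,T];H^{-1})$.

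\emph{Limit, uniqueness, and finite speed.} By Banach--Alaoglu I pass to a weakly-$*$ convergent subsequence; testing against finite combinations $\sum_k c_k(t)w_k$ and invoking density shows the limit $q$ is a weak solution, while the standard continuity argument gives $q(0)=q'(0)=0$. For uniqueness the difference $w$ of two solutions solves the homogeneous problem with zero data, so the energy identity forces $w\equiv0$; the only subtlety, that $w'$ is not an admissible test function at this regularity, is handled by the customary time-mollification (Ladyzhenskaya-type) device, which also legitimizes the energy identity above. Finally, for claim (i) I localize to the backward cone of slope $1/c_+$ and study $e(t)=\tfrac12\int_{|x-x_0|\le c_+(t_0-t)}(c^{-2}q_t^2+|\nabla q|^2)\,dx$. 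For $t\in(0,t_0]$ the spatial balls stay strictly inside $\Om^c$, so no contribution from the source $\delta_{\pd\Om}g$ enters; differentiating $e$, integrating by parts, and using $c\le c_+$ to dominate the lateral flux $q_t\,\pd_\nu q$ by $\tfrac{c_+}{2}(c^{-2}q_t^2+|\nabla q|^2)$ (with $-\int a\,q_t^2\le0$) gives $e'(t)\le0$. As $e(0)=0$ this forces $e\equiv0$, hence $q(x_0,t_0)=0$; the argument is made rigorous by running it on smooth approximations of $g$ and passing to the limit.
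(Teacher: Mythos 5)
Your Galerkin construction and energy estimate follow essentially the same route as the paper: the paper also builds the solution by a Galerkin scheme (on a large ball $B_R$, $R=R_0+c_+T$, with an $H_0^1(B_R)$ basis, rather than directly on $\R^d$ as you do), and its central maneuver is exactly yours --- integrate the boundary pairing by parts in time so that only traces of the approximants (not of their time derivatives) appear, then use the $H^{-1/2}$--$H^{1/2}$ duality, the trace theorem, Young's inequality and Gronwall. Working directly on $\R^d$ is a legitimate variant; it even spares you the paper's need to invoke finite speed of propagation in order to extend the ball solution by zero to $\R^d$. Your uniqueness sketch is at the same level of detail as the paper's, which defers to the literature.

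The genuine gap is in your justification of the finite speed of propagation at the available regularity. The local energy argument (differentiating $e(t)$, integrating by parts, dominating the lateral flux) needs $q_t \in H^1$ and $q_{tt} \in L^2$ locally near the cone, and the weak solution does not have this. Your fix --- ``run the argument on smooth approximations of $g$ and pass to the limit'' --- does not supply it. Smoothing $g$ in time does not help: to upgrade time regularity one would differentiate the equation in $t$, but the initial value of $q_{tt}$ is forced to be $-c^2\,\delta_{\pd \Om}\, g(0)$, which is not in $L^2(\R^d)$ whenever $g(0)\neq 0$; and since $\set{g \colon g(0)=0}$ is a closed proper subspace of $H^1([0,T];H^{-1/2}(\pd\Om))$, you cannot reduce to that case by density. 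Smoothing $g$ in the spatial variable on $\pd\Om$ leaves the source a single layer, so the solution still has a normal-derivative jump and $q_{tt}\notin L^2$ across $\pd\Om$. Appealing instead to interior regularity away from the source is circular, because for hyperbolic equations such a statement is itself a domain-of-dependence assertion --- the very thing being proved. The paper resolves this with a short but essential device that is missing from your proposal: apply the local energy argument not to $q$ but to $Q(x,t)=\int_0^t q(x,\tau)\,d\tau$, which solves the same transmission problem with time-integrated jump data and automatically has exactly the required regularity ($Q'=q\in L^2([0,T];H^1)$ and $Q''=q'\in L^2([0,T];L^2)$); the vanishing of $Q$ on the cone then yields the vanishing of $q=Q'$ there. (A workable version of your smoothing idea would have to mollify the entire single-layer source $\delta_{\pd\Om}\,g$ in space-time, so that the smoothed source has square-integrable values at $t=0$, and then shrink the cones to account for the enlarged source support --- a considerably heavier argument than the paper's one-line trick.)
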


\begin{proof}

Let $B_R$ denote the ball of radius $R$ centered at the origin and $R:=R_0 + c_+ T$, where $R_0$ satisfies $\Om \subset B_{R_0}$. Let $H_0^1(B_R)$ be the closure of $C_0^\infty(B_R)$ with respect to the norm 
$$\|f\|_{H_0^1(B_R)} = \left [ \int_{B_R} |\nabla f|^2 dx \right]^{1/2}.$$
Our proof is divided into two steps:

\emph{Step 1:} There exists a weak solution $q$ of (\ref{E:Ap}) on $B_R$. That is,  \begin{itemize}
\item[i')] $q \in L^2([0,T];H_0^1(B_R))$, $q' \in L^2([0,T];L^2(B_R))$, $q'' \in L^2([0,T];H^{-1}(B_R))$,
\item[ii')] $q(0)=0$ and $q'(0) =0$, and
\item[iii')] for any function $\phi \in H_0^1(B_R)$ 
\begin{multline*} \int_{B_R} c^{-2}(x) \, q_{tt}(x,t) \, \phi(x) \,dx  +   \int_{B_R} a(x) \, q_{t}(x,t) \, \phi(x) \,dx  \\ +  \int_{B_R} \nabla q(x,t) \, \nabla \phi(x) \,dx  = - \int_{\partial \Om}  \, g(y,t) \, \phi(y) \, dy \,\quad   \mbox{a.e } t \in [0,T]
\end{multline*}
\end{itemize}

\emph{Step 2:} The solution $q$ in Step~1 satisfies: $q(x,t) =0$ for all $(x,t) \in \Om^c \times [0,T]$ such that $dist(x,\partial \Om) \geq c_+ t$. 

Once both steps are proved, the solution $q$ of equation (\ref{E:Ap}) is just the trivial extension of $q$ into $[0,T] \times \R^d$. Let us now proceed to prove those steps. 

\emph{Proof of Step 1:} Let $\{\phi_{k}\}_k$ be an orthogonal basis of $H_0^1(B_R)$.\footnote{One such basis is the set of normalized eigenvectors of the Laplacian with the zero boundary condition.} For any integer $N$, we define $$q_N(x,t) = \sum_{i=1}^N d_i(t) \phi_i(x)$$ to be a solution of the system
\begin{multline}\label{E:key} \int_{B_R} c^{-2}(x) \, q_{N,tt}(x,t) \, \phi_i(x) \,dx  + \int_{B_R} a(x) \, q_{N,t}(x,t) \, \phi_i(x) \,dx   \\ +\int_{B_R} \nabla q_N(x,t) \, \nabla \phi_i(x) \,dx = -  \int_{\partial \Om} g(y,t) \, \phi_i(y) \, dy, \quad i =1,\dots,N.
\end{multline} together with the initial condition $q_N(x,0) = q_{N,t}(x,0) =0$. Since the above system is a standard linear ODE system for $(d_1,\dots, d_N)$, $q_N$ uniquely exists. Multiplying each equation by $d_i'(t)$ and summing them up, we obtain:
\begin{multline*} \int_{B_R} c^{-2}(x) q_{N,tt} (x,t) q_{N,t}(x,t) \,dx  +  \int_{B_R} a(x) \, [q_{N,t}(x,t)]^2 \,dx   \\ +\int_{B_R} \nabla q_N(x,t) \, \nabla q_{N,t} \,dx = -  \int_{\partial \Om} g(y,t) \, q_{N,t} (y,t) \, dy.
\end{multline*}
This implies
\begin{equation*} \frac{1}{2} \frac{d}{dt} \left[\int_{B_R} c^{-2}(x) |q_{N,t}(x,t)|^2  \,dx  +  \int_{B_R} |\nabla q_N(x,t)|^2 dx \right]   \leq  -  \int_{\partial \Om} g(y,t) \, q_{N,t}(y,t) \, dy.\end{equation*}
Taking the integration of both sides with respect to $t$ and using the initial conditions for $q_N$:
\begin{multline*} \frac{1}{2}  \left[\|q_{N,t}(\edot, t)\|^2 + \|q_N(\edot,t)\|^2_{H_0^1(B_R)} \right ]  \leq   \\ - \int_{\partial \Om} g(y,t) \, q_N(y,t) \, dy +  \int_0^t  \int_{\partial \Om} g_t(y,t) \, q_N(y,t) \, dy. \end{multline*}
Bounding the first term of the right hand side, we obtain
\begin{multline*}  \frac{1}{2}  \left[\|q_{N,t}(\edot,t)\|^2 + \|q_N(\edot,t)\|^2_{H_0^1(B_R)} \right ] \leq \|g(\edot,t)\|_{H^{-1/2}(\partial \Om)} \|q_N(\edot,t)\|^2_{H^{1/2}(\partial \Om)}  \\ + \int_0^t \|g_t(\edot,t)\|^2_{H^{-1/2}(\partial \Om)}  + \int_0^t \|q_N(\edot,t)\|_{H^{1/2}(\partial \Om)}.\end{multline*} 
Now, Young's inequality gives
\begin{multline*}  \frac{1}{2}  \left[\|q_{N,t}(\edot,t)\|^2 + \|q_N(\edot,t)\|^2_{H_0^1(B_R)} \right ]  \leq A \|g(\edot,t)\|^2_{H^{-1/2}(\partial \Om)} \\ + \frac{1}{2A}\|q_N(\edot,t)\|^2_{H^{1/2}(\partial \Om)}  + \int_0^t \|g_t(\edot,t)\|^2_{H^{-1/2}(\partial \Om)}  + \int_0^t \|q_N(\edot,t)\|_{H^{1/2}(\partial \Om)},\end{multline*} where $A>0$ can be any constant, whose value will be specified later. 
Noting that $\|q_N(\edot,t)\|_{H^{1/2}(\partial \Om)} \leq C \|q_N(\edot,t)\|_{H_0^{1}(B_R)}$ we obtain by choosing $A$ big enough 
\begin{multline*}  \frac{1}{2}  \left[\|q_{N,t}(\edot,t)\|^2 + \|q_N(\edot,t)\|^2_{H_0^1(B_R)} \right ]  \leq A \|g(\edot,t)\|^2_{H^{-1/2}(\partial \Om)} \\ + \frac{1}{4} \|q_N(\edot,t)\|^2_{H^{1}_0(B_R)}  + \int_0^t \|g_t(\edot,t)\|^2_{H^{-1/2}(\partial \Om)}   + C \int_0^t \|q_N(\edot,t)\|^2_{H^{1}_0(B_R)}. \end{multline*}
Here and in the sequel, $C$ is a generic constant whose value may vary from one place to another. Therefore, 
\begin{multline*} \|q_{N,t}(\edot,t)\|^2  +  \|q_N(\edot,t)\|^2_{H_0^1(B_R)}  \leq C \big( \|g(\edot,t)\|^2_{H^{-1/2}(\partial \Om)}  \\ +  \int_0^T \|g_t(\edot,t)\|^2_{H^{-1/2}(\partial \Om)} +  \int_0^t \|q_N(\edot,t)\|^2_{H_0^{1}(B_R)}\big), \quad t \in [0,T].\end{multline*}
Let $E_N(t) := \int_0^t \|q_{N,t}(\edot,t)\|^2 + \|q_N(\edot,t)\|^2_{H^{1}(B_R)}$. We arrive at
\begin{equation*}  E_N'(t) - C E_N(t)  \leq C \big( \|g(\edot,t)\|^2_{H^{-1/2}(\partial \Om)}   + \|g_t\|^2_{L^2([0,T],H^{-1/2}(\partial \Om))} \big),~ t \in [0,T].\end{equation*}
From the Grownwall's inequality, we obtain \begin{equation} \label{E:estimate} E_N(T) \leq C(  \|g\|^2_{L^2([0,T],H^{-1/2}(\partial \Om))}  +  \|g_t\|^2_{L^2([0,T],H^{-1/2}(\partial \Om))} ).\end{equation} Since $C$ is a constant independent of $N$, $\{q_{N}\}$ and $\{q_{N,t}\}$ are bounded sequences in $L^2([0,T],H^1_0(B_R))$ and $L^2([0,T];L^2(B_R))$, respectively. After possibly passing over to subsequences, we obtain
$q_N \rightharpoonup q$ in $L^2([0,T];H^1_0(B_R))$ and $q_{N,t} \rightharpoonup q_1$ in $L^2([0,T];L^2(B_R))$. It is easy to show that $q_1=q'$. Since $\{\phi_k\}$ is a basis of $H_0^1(B_R)$, from (\ref{E:key}), we obtain for any $v \in L^2([0,T];H_0^1(\Om))$:
\begin{multline*} \lim_{N \to \infty} \int_0^T \int_{\R^d} c^{-2}(x) \, q_{N,tt}(x,t) \, v(x,t) \,dx dt   + \int_0^T \int_{\R^d} a(x) \, q_{t}(x,t) \, v(x,t) \,dx  dt   \\ + \int_0^T \int_{\R^d} \nabla q(x,t) \, \nabla v(x,t) \,dx = -  \int_{\partial \Om} g(y,t) \, v(y,t) \, dy.
\end{multline*}
That is, $q_{N,tt}$ converges to an element in $L^2([0,T], H^{-1}(B_R))$. That is, $q_{tt} \in L^2([0,T], H^{-1}(B_R))$ and 
\begin{multline*} \int_0^T \int_{\R^d} c^{-2}(x) \, q_{tt}(x,t) \, v(x,t) \,dx \,dt + \int_0^T \int_{\R^d} a(x) \, q_{t}(x,t) \, v(x,t) \,dx  dt  \\ + \int_0^T \int_{\R^d} \nabla q(x,t) \, \nabla v(x,t) \,dx dt   = -  \int_0^T \int_{\partial \Om} g(y,t) \, v(y,t) \, dy \, dt.
\end{multline*}
Let $\phi \in H_0^1(B_R)$. For any $t_0 \in (0,T)$, choosing\footnote{For any set $U$, $\chi_U$ is the characteristic function of $U$.} $v(x,t) = \phi(x) \chi_{[t_0-\eps, t_0+ \eps]}(t)$, we obtain
\begin{multline*} \int_{t_0-\eps}^{t_0+\eps} \int_{\R^d} c^{-2}(x) \, q_{tt}(x,t) \, \phi(x) \,dx \,dt  + \int_{t_0-\eps}^{t_0+ \eps} \int_{\R^d} a(x) \, q_{t}(x,t) \, \phi(x) \,dx  dt  \\ + \int_{t_0-\eps}^{t_0+\eps} \int_{\R^d} \nabla q(x,t) \, \nabla \phi(x) \,dx dt = - \int_{t_0 - \eps}^{t_0+\eps}  \int_{\partial \Om} g(y,t) \, \phi(y) \, dy \, dt.
\end{multline*}
Dividing both sides by $2 \eps$ and send $\eps \to 0$, we obtain
\begin{multline*} \int_{B_R} c^{-2}(x) \, q_{tt}(x,t_0) \, \phi(x) \,dx   +   \int_{B_R} a(x) \, q_{t}(x,t_0) \, \phi(x) \,dx  \\ +  \int_{B_R} \nabla q(x,t_0) \, \nabla \phi(x) \,dx  = - \int_{\partial \Om}  \, g(y,t_0) \, \phi(y) \, dy \,\quad   \mbox{a.e } t_0 \in [0,T]
\end{multline*}
This finishes the proof of Step 1, since ii') easily follows from the fact that $q_N (\edot,0) =0$ and $q_{N,t} (\edot,0) =0$. 

\emph{Proof of step 2:} We first prove the result in the case $u' \in L^2([0,T],H^1(\Om))$ and $u'' \in L^2([0,T],L^2(\Om))$. Let $(x_0,t_0) \in (B_R \setminus \Om) \times [0,T]$ such that $dist(x_0,\partial \Om)> c_+ t_0$. There is $\eps_0>0$ such that for each $t \in [0,t_0]$, we have $B(x_0, (c_++ \eps_0) (t_0-t))  \cap \partial \Om =\emptyset$. We also denote $\mO_t = B(x_0, c (t_0-t)) \cap B_R$ and
$$E(t) = \frac{1}{2} \int_{\mO_t} c^{-2}(x) |q_t(x,t)|^2 + |\nabla q(x,t)|^2 dx, \quad 0 \leq t \leq t_0.$$
Then, 
\begin{multline*} \frac{d}{dt}E(t) = -\frac{c_+}{2} \int_{\partial \mO_t \setminus \partial B_R} c^{-2}(x) |q_t(x,t)|^2 + |\nabla q(x,t) |^2 d\sigma(x) \\+ \int_{\mO_t} c^{-2}(x) q_t(x,t) \, q_{tt}(x,t) + \nabla q(x,t) \nabla q_t(x,t) \, dx.\end{multline*}
Taking integration by parts for the second integral gives the following formula of $\frac{d}{dt}E(t)$:
\begin{multline*}  -\frac{c_+}{2} \int_{\partial \mO_t \setminus \partial \Om_R} \big[ c^{-2}(x) |q_t(x,t)|^2 + |\nabla q(x,t)|^2  - 2 \partial_\nu q(x,t)  \\ \times \frac{q_t(x,t)}{c_+} \big] d\sigma(x)  + \int_{\mO_t} \big[c^{-2}(x)  q_{tt}(x,t) - \Delta q(x,t) \big] q_t(x,t) \, dx. \end{multline*}
Noting that the integrand of the first term on the right hand side is nonnegative, we arrive to
\begin{equation*} \frac{d}{dt}E(t)  \leq  \int_{\mO_t} \big[c^{-2}(x)  q_{tt}(x,t) - \Delta q(x,t) \big] q_t(x,t) \, dx. \end{equation*}
Let us recall that for any function $\phi \in H_0^1(B_R)$ 
\begin{multline*} \int_{B_R} c^{-2}(x) \, q_{tt}(x,t) \, \phi(x) \,dx   +   \int_{B_R} a(x) \, q_{t}(x,t) \, \phi(x) \,dx   \\ +  \int_{B_R} \nabla q(x,t) \, \nabla \phi(x) \,dx  = -\int_{\partial \Om}  \, g(y,t) \, \phi(x) \, dy.
\end{multline*}
For $0<\eps<\eps_0$ we choose $\varphi_\eps \in C^\infty(\R^d)$ be a nonnegative function such that $\varphi \equiv 1$ on $B_{(x_0, c_+(t_0 -t))}$ and $\varphi \equiv 0$ outside of $B_{(x_0,(c_+ + \eps) (t_0-t))}$ and $\lim_{\eps \to 0} \varphi_\eps = \chi_{B_{x_0,c_+(t_0-t)}}$ on $L^2(\R^d)$. Choosing $\phi(x)= q_t(x,t) \varphi_\eps(x)$, we obtain
\begin{multline*} \int_{B_R} c^{-2}(x) \, q_{tt}(x,t) \, q_t(x,t) \varphi_\eps(x) \,dx    +   \int_{B_R} a(x) \, q_{t}(x,t) \, q_t(x,t) \varphi_\eps (x) \,dx   \\ +  \int_{B_R} \nabla q(x,t) \, \nabla [v_t(x,t) \varphi_\eps(x)] \,dx  =0.
\end{multline*}
Taking integration by parts for the last integral and combine it with the first integral, we obtain
\begin{equation*} \int_{B_R} \left[c^{-2}(x) \, q_{tt}(x,t) - \Delta q(x,t)\right] \, q_t(x,t) \varphi_\eps(x) \,dx    +   \int_{B_R} a(x) \, q^2_{t}(x,t)  \varphi_\eps (x) \,dx  =0.
\end{equation*}
Therefore, 
\begin{equation*} \int_{B_R} \left[c^{-2}(x) \, q_{tt}(x,t) - \Delta q(x,t)\right] \, q_t(x,t) \varphi_\eps(x) \,dx  \leq 0.
\end{equation*}
Taking the limit as $\eps \to 0$, we obtain 
\begin{equation*} \int_{\mO_t} \left[c^{-2}(x) \, q_{tt}(x,t) - \Delta q(x,t)\right] \, q_t(x,t)  \,dx  \leq 0.
\end{equation*}
 We obtain $\frac{E(t)}{dt} \leq 0.$ Noting that $E(0) =0$, we arrive at $E(t) =0$ for all $t \in [0,t_0]$. Therefore, $q(x,t) = 0$ on $\mO_t$ for all $t \in [0,t_0]$. Since this is correct for all $(x_0,t_0) \in \Om^c \times [0,T]$ such that $dist(x_0, \partial \Om) > c_+ t_0$,  It is now easy to see $q(x,t) = 0$ for all $(x,t) \in \Om^c$ such that $dist(x,\partial \Om) \geq c_+ t$. \\
In general, we do not have the required regularity for the above proof. However, consider $Q(x,t)  = \int_0^t q(x,\tau) d \tau$. Then, $Q$ satisfies the same equation (with a different jump function) and the required regularity. The above proof then shows that $Q(x,t) =0$ for all $(x,t) \in \Om^c \times [0,T]$ such that $dist(x,\partial \Om) \geq c_+ t$. It implies the same result  for $q(x,t)$.  This finishes proof of Step 2. \\
\emph{Finishing the proof:} Now extending $q$ into $\R^d \times [0,T]$ by zero on $(\R^d \setminus B_R) \times [0,T]$, we can easily prove that $q$ is a weak solution on $\R^d \times [0,T]$. Moreover, $q$ satisfies the finite speed of propagation (i). Finally, the estimate (\ref{E:est}) follows from (\ref{E:estimate}). The uniqueness of $q$ is simple (see, e.g., proof of Theorem A.2 in \cite{belhachmi2016direct}), we leave the details to the reader. 

\end{proof}

\subsection{A $k$-space method for the damped wave equation}
\label{app:kspace}

In this subsection, we briefly describe the $k$-space method as we use it to
numerically compute the solution of the wave equation,  which is required for evaluating the forward operator $\Wo$ and its adjoint $\Wo^*$.
For the case $a=0$, several methods for numerically solving the underlying acoustic wave  equation have been used in  PAT. This includes finite difference methods \cite{Burgholzer2007,nguyen2016dissipative,StefanovYang2},
finite element methods \cite{belhachmi2016direct} as well as  Fourier spectral and $k$-space methods \cite{cox2007k,huang2013full,treeby2010k}.
We now extend the $k$-space method  to the case $a \neq 0$ because this method does not suffer from numerical dispersion \cite{compani1986k}.

Consider the solution $p \colon \R^d  \times (0, T) \to \R$ of the
damped wave equation
 \begin{align} \label{eq:waveS}
	& [ c^{-2} \, \partial_{tt} + a  \, \partial_{t}  -   \Delta] p   = s
	&&  \mbox{ on } \R^d \times (0,T)  \,,
	\\ \label{eq:waveSi}
 	&
	p(0)  = f
	&&  \mbox{ on }  \R^d  \,,
	\\  \label{eq:waveSii}
 	&
	p_t(0)  = - c^2 \, a \, f
	&&  \mbox{ on }  \R^d  \,.
\end{align}
\mbox
Here, $s \colon \R^d \times   (0,T) \to \R$ is a given source term
and $f  \colon \R^d   \to \R$ the given initial pressure.
To derive the  $k$-space method one first rewrites \eqref{eq:waveS} in the form
 \begin{equation} \label{eq:waveS2}
  [\partial_{tt}    - c_0^{2}   \Delta ] p  =
	(1 - c_0^{2} / c^{2})   p_{tt}- c_0^{2} a \,  p_{t} +  c_0^{2} s
\end{equation}
where  $c_0>0$ is a suitable constant; we take $c_0 =c_+ := \max \set{ c(x) \colon x \in \R^2}$.

The $k$-space method is  derived from  \eqref{eq:waveS2} by
introducing the auxiliary functions  $v(x,t)$ and $r(x,t)$ such that
$v_{tt}(x,t) =  (1 - c_0^{2} / c^{2}(x)  )   p_{tt}(x,t) $ and  $r_{tt}(x,t) = c_0^{2} a(x)   p_{t}(x,t) $. Such an approach shows that  \eqref{eq:waveS2} is
equivalent to the following system of equations,
\begin{align}  \label{eq:waveK1}
        [\partial_{tt} -  c_0^{2} \Delta] w
	& = c_0^2 \, s + c_0^2
	\Delta v - c_0^2 \, \Delta r \,,
\\	 \label{eq:waveK2}
	v &=  \kl{ c^2 / c_0^2 - 1 }  \, (w - r)
\\  \label{eq:waveK3}
	p &=  v+ w - r
\\  \label{eq:waveK4}
	r(t) &= c_0^2 a   \int_{0}^t  p(s) \rmd s  \,.
\end{align}
Interpreting $c_0^2
	\Delta v(x,t) - c_0^2 \, \Delta r(x,t) $ as an additional source term, \eqref{eq:waveK1} is a standard wave equation
with  constant sound speed $c_0$.
This suggests the time stepping  formula
\begin{multline} \label{eq:timestep}
w(x,t + h_t)
=2  w(x,t) -   w(x,t - h_t)
- 4  \Fo_\xi^{-1} \Bigl[  \sin(c_0 \sabs{\xi} h_t/2)^2 \times
\\
\Fo_x [w(x,t) + v(x,t) - r(x,t) ]
  -
(c_0h_t/2)^2   \sinc(c_0 \sabs{\xi} h_t/2)^2   \Fo_x [s(x,t) ]  \Bigr]
 \,,
\end{multline}
where $\Fo_x$ and $\Fo_\xi^{-1}$ denote  the Fourier and inverse Fourier transforms in the spatial variable $x$ and the spatial frequency variable $\xi$, respectively, and  $h_t > 0$ is a time stepping size.

The resulting $k$-space method for  solving~\eqref{eq:waveS}
is summarized in Algorithm~\ref{alg:kpace}.

\begin{alg}[The $k$-space\label{alg:kpace} method]
For given initial pressure $f(x)$ and source  term $s(x,t)$
approximate  the solution  $p(x,t)$ of \eqref{eq:waveS} as follows:
\begin{enumerate}[leftmargin=3em,label= (\arabic*)]
\item\label{k1}
Set $t = 0$ and define initial conditions
\begin{itemize}
\item $r(x,0)  = 0$;
\item $v(x,0)  = (1-c_0^2 / c^2 (x))   f(x)$;
\item $w(x,0) =  c_0^2 / c^2 (x)      f(x)$;
\item $w(x,-h_t) =  (1 + h_t c_0^2   a(x) ) w(x,0) $.
\end{itemize}

\item\label{k2}
Compute $w(x,t + h_t)$ by evaluating \eqref{eq:timestep};

\item\label{k3} Make the updates
 \begin{itemize}
 \item $v(x,t + h_t) \coloneqq   \kl{c^2(x)/c_0^2- 1} \, ( w(x,t+h_t) - r(x,t) ) $;
 \item $p(x,t + h_t) \coloneqq   v(x,t + h_t) + w(x,t + h_t) - r(x,t) $;
 \item  $r(x,t + h_t)  \coloneqq   r(x,t) + c_0^2 a(x) p(x,t + h_t)  h_t $;
 \end{itemize}

 \item\label{k7} Set $t \gets t+h_t $ and go back to \ref{k3}.

\end{enumerate}
\end{alg}

 Algorithm~\ref{alg:kpace} can directly be used to evaluate the
forward operator $\Wo f$ by taking  $s(x,t) =0$ and  restricting
the solution to the measurement surface $S_R$, that is $\Wo f = p|_{S_R \times (0, T)}$.
Recall that the  adjoint operator is given by $\Wo^*g  = q_t(0)$,
where $q\colon \R^2 \times (0,T) \to \R$ satisfies the adjoint wave equation
 \begin{align} \label{eq:wave2ad}
	&  [c^{-2} \, \partial_{tt} -    \Delta] q  = -    \delta_{S_R} \, g
	&&  \mbox{ on } \R^2 \times (0,T) \\
 	&
	q_t(T) = q(T)  = 0 && \mbox{ on } \R^d.
\end{align}
By substituting $t \gets T- t$  and taking $s(x,t)  =    g(x,T-t) \,  \delta_{S}(x)$
as source term in \ref{eq:waveS}, Algorithm~\ref{alg:kpace}
can also be used to evaluate the $\Wo^*$. In the partial data case where
measurements are made on a subset $S \subsetneq S_R$ only, the
adjoint  can be implemented by taking the source
 $s(x,t)  =  \chi(x,t) \,  g(x,T-t)\, \delta_{S_R}(x)$ with an
appropriate window function $\chi(x,t)$.
In order to   use all available data, in  our implementations
we  take  the window function to be equal to one on the observation
part $S$ and zero outside.  This choice of the window function is known to create streak artifacts
into the picture \cite{frikel2015artifacts,sima,BFNguyen}. However, as we see in our simulations, the artifacts fade
away quickly after several iterations when the problem is well-posed.

\end{document}